\begin{document}

\title*{Generalized Derivations of  $n$-BiHom-Lie algebras}
\author{Amine  Ben Abdeljelil, Mohamed  Elhamdadi, Ivan  Kaygorodov and Abdenacer  Makhlouf}
 \authorrunning{A.  Ben Abdeljelil, M.  Elhamdadi, I.  Kaygorodov and A.  Makhlouf} 
\institute{Amine  Ben Abdeljelil \at Department of Mathematics, 
	University of South Florida, Tampa, FL 33620, U.S.A., \email{amine@mail.usf.edu}
	\and Mohamed  Elhamdadi \at Department of Mathematics, 
University of South Florida, Tampa, FL 33620, U.S.A., \email{emohamed@math.usf.edu}
\and Ivan  Kaygorodov \at Universidade Federal do ABC, CMCC, Santo Andr\'e, Brazil \email{kaygorodov.ivan@gmail.com}
\and Abdenacer  Makhlouf \at Universit\'e de Haute Alsace, IRIMAS- d\'epartement de Math\'ematiques, Mulhouse, 
France \email{Abdenacer.Makhlouf@uha.fr}}
%
%

\maketitle

\abstract{We investigate generalized derivations of  $n$-BiHom-Lie algebras.  We introduce and study properties of derivations, $(  \alpha^{s},\beta^{r}) $-derivations and generalized derivations. We also study quasiderivations of $n$-BiHom-Lie algebras.  Generalized derivations of $(n+1)$-BiHom-Lie algebras induced by $ n $-BiHom-Lie algebras are also considered.}

\section{Introduction}
\label{sec:1}
Higher $n$-ary operations and particularly ternary operations are natural generalizations of binary operations.  They appeared in many areas of mathematics and physics.  
The first ternary algebraic structure given in an axiomatic form appeared in 1949 in the work of  N. Jacobson \cite{Jack}.  He considered a Lie bracket $[x,y]$ in a Lie algebra $\mathcal{L}$ and a subspace that is closed with respect to $[[x,y],z]$ which he called a Lie triple system. In theoretical physics, the generalization of Hamiltonian mechanics by Nambu generated lead to  ternary algebras, which are generalizations of Lie algebras.  The algebraic formulation of this structure was achieved by Fillipov \cite{Filippov:nLie} and Takhtajan \cite{takhtajan,takhtajan2} based on some generalization of the Jacobi identity.

Hom-type algebras appeared also in Physics literature when studying $q$-deforma-tions of algebras of vector fields like Witt and Virasoro algebras. It turns out that usual Jacobi identity is twisted by a homomorphism. This type of algebras was studied  in Lie case first in \cite{hom06}, then extended to  associative algebras and various other non-associative algebras in \cite{MS}. $n$-ary Hom-type generalization of $n$-ary algebras were introduced in \cite{AMS}. 
Derivations and generalized derivations of many varieties of algebras and Hom-algebras were investigated in \cite{BEM, kay1, kay_lie, kay_lie2, kay12mz, kay11aa, kay14sp, kay14mz, Kokh, KP, KP16,  KP16com, KN03, LL00, bkp, ZZ10, zhel, ZCM16,zus10}.

In \cite{CG}, the authors studied  Hom-algebras from a category theoretical point of view. This approach was generalized in  \cite{Bihom1} and lead to the concepts of  BiHom-algebras where the defining identities are twisted by two morphisms instead of only one for Hom-algebras. BiHom-type $n$-ary algebras were introduced in \cite{KMS}. 

In this article, we aim to discuss generalized derivations of  $n$-BiHom-Lie algebras. The following is the organization of the paper.  Section~\ref{Prelimin} deals with the preliminary background including the main definitions.  In Section~\ref{Deriv} we introduce and study properties of derivations, $(  \alpha^{s},\beta^{r}) $-derivations and generalized derivations. Section~\ref{Quasi-Derivations} is dedicated to quasiderivations of $n$-BiHom-Lie algebras.  In section~\ref{induced} we study generalized derivations of $(n+1)$-BiHom-Lie algebras induced by $ n $-BiHom-Lie algebras.  

\section{Basic review of $n$-BiHom-Lie algebras}
\label{Prelimin}

\begin{definition}
	A quadruple  $ (\mathfrak{g}, [\cdot,\cdot,\cdot], \alpha, \beta ) $, where $ \mathfrak{g} $ is a vector space, $ \alpha, \beta $ are linear maps of $ \mathfrak{g} $, and $ [\cdot,\cdot,\cdot] : \mathfrak{g}^{\otimes3} \rightarrow \mathfrak{g}$ is a $ 3 $-linear map, is called a $ 3 $-BiHom-Lie algebra if the following conditions are satisfied,
	\begin{enumerate}
		\item $ \alpha \circ \beta=\beta \circ \alpha $.
		\item $ [\beta(x_{1}),\beta(x_{2}),\alpha(x_{3})]=Sgn(\sigma)[\beta(x_{\sigma(1)}),\beta(x_{\sigma(2)}),\alpha(x_{\sigma(3)})] $, for all $ x_{1},x_{2},x_{3} \in \mathfrak{g} $ and $ \sigma \in S_{3} $.
		\item $ [\beta^{2}(x_{1}),\beta^{2}(x_{2}),[\beta(y_{1}),\beta(y_{2}),\alpha(y_{3})]]=  [\beta^{2}(y_{2}),\beta^{2}(y_{3}),[\beta(x_{1}),\beta(x_{2}),\alpha(y_{1})]] \\ - [\beta^{2}(y_{1}),\beta^{2}(y_{3}),[\beta(x_{1}),\beta(x_{2}),\alpha(y_{2})]] + [\beta^{2}(y_{1}),\beta^{2}(y_{2}),[\beta(x_{1}),\beta(x_{2}),\alpha(y_{3})]]$, \\
		for all  $ x_{1},x_{2},y_{1},y_{2},y_{3} \in \mathfrak{g} $.
	\end{enumerate}
\end{definition}

\begin{definition}
	An $ n $-BiHom-Lie algebra is a vector space $ V $ equipped with an $ n $-linear map $ [\cdot, \ldots,\cdot] $ and two linear maps $ \alpha $ and $ \beta $ such that :
	\begin{enumerate}
		\item $ \alpha \circ \beta=\beta \circ \alpha $.
		\item $[\beta(x_{1}),\ldots,\beta(x_{n-1}),\alpha(x_{n})]=Sgn(\sigma)[\beta(x_{\sigma(1)}),\ldots,\beta(x_{\sigma(n-1)}),\alpha(x_{\sigma(n)})] $, for any $ \sigma \in S_{n} $.
		\item 
		\begin{align*}
			& [\beta^{2}(x_{1}),\ldots,\beta^{2}(x_{n-1}),[\beta(y_{1}),\ldots,\beta(y_{n-1}),\alpha(y_{n})]]= \\ &  \sum_{k=1}^{n}  (-1)^{n-k} [\beta^{2}(y_{1}),\ldots,\widehat{\beta^{2}(y_{k})},\ldots,\beta^{2}(y_{n}),[\beta(x_{1}),\ldots, \beta(x_{n-1}),\alpha(y_{k})]] ,
		\end{align*}  for all $ x_{1},\ldots,x_{n-1},y_{1},\ldots,y_{n} \in V$
	\end{enumerate}
\end{definition}
We say that $ (\mathfrak{g}, [\cdot,\ldots,\cdot], \alpha, \beta ) $ is a multiplicative $ n $-BiHom-Lie algebra if $ \alpha $ and $ \beta $ are algebra morphisms and regular if they are automorphisms.

\bigskip
$ n $-BiHom-Lie algebras may be induced from $ n $-Lie algebras using two algebra morphisms as stated in the following proposition given in \cite{KMS}.
\begin{proposition}
	Let $ (V,[\cdot,\ldots,\cdot]) $ be an $ n $-Lie algebra and $ \alpha, \beta $ two morphisms of $ V $ that commute with each other. For $ x_{1},\ldots,x_{n} \in V$ define 
	\begin{equation*}
		[x_{1},\ldots,x_{n}]_{\alpha\beta}= [\alpha(x_{1}),\ldots,\alpha(x_{n-1}),\beta(x_{n})],
	\end{equation*}
	then $ (V,[\cdot,\ldots,\cdot]_{\alpha\beta},\alpha,\beta) $ is an $ n $-BiHom-Lie algebra.
\end{proposition}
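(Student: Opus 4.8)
The plan is to verify the three defining axioms of an $n$-BiHom-Lie algebra directly for $(V,[\cdot,\ldots,\cdot]_{\alpha\beta},\alpha,\beta)$. Axiom (1), namely $\alpha\circ\beta=\beta\circ\alpha$, holds by hypothesis, so nothing is required there. For the remaining two axioms the key recurring observation is that unfolding the twisted bracket and invoking commutativity of $\alpha$ and $\beta$ reduces every expression to the original $n$-Lie bracket evaluated on arguments that all carry the \emph{same} symmetric twist; once this is achieved, the original $n$-Lie axioms (skew-symmetry and the Filippov identity) apply.

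For axiom (2), I would first unfold $[\beta(x_1),\ldots,\beta(x_{n-1}),\alpha(x_n)]_{\alpha\beta}=[\alpha\beta(x_1),\ldots,\alpha\beta(x_{n-1}),\beta\alpha(x_n)]$ and then use $\alpha\beta=\beta\alpha$ to rewrite the last entry as $\alpha\beta(x_n)$, so that every argument has the form $\alpha\beta(\cdot)$. The skew-symmetry of the original $n$-Lie bracket then produces the factor $\mathrm{Sgn}(\sigma)$ immediately, and running the same computation in reverse on the right-hand side closes the case.

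The substantial case is axiom (3). I would introduce the shorthand $u_i=\alpha\beta^2(x_i)$ and $v_j=\alpha\beta^2(y_j)$. Expanding the left-hand side, the inner bracket $[\beta(y_1),\ldots,\beta(y_{n-1}),\alpha(y_n)]_{\alpha\beta}$ becomes $[\alpha\beta(y_1),\ldots,\alpha\beta(y_n)]$ by commutativity; applying the outer twisted bracket and then pushing the remaining $\beta$ through the inner bracket --- legitimate since $\beta$ is an algebra morphism --- and again using commutativity, the whole left-hand side collapses to $[u_1,\ldots,u_{n-1},[v_1,\ldots,v_n]]$. The classical Filippov identity rewrites this as $\sum_{k=1}^n [v_1,\ldots,v_{k-1},[u_1,\ldots,u_{n-1},v_k],v_{k+1},\ldots,v_n]$.

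On the right-hand side, the identical unfolding turns the $k$-th summand into $(-1)^{n-k}[v_1,\ldots,\widehat{v_k},\ldots,v_n,[u_1,\ldots,u_{n-1},v_k]]$. The only delicate point is reconciling the two sums: in the Filippov expression the inner bracket $[u_1,\ldots,u_{n-1},v_k]$ occupies the $k$-th slot, whereas in the BiHom expression it occupies the last slot. Transposing it past the $n-k$ entries $v_{k+1},\ldots,v_n$ produces exactly the sign $(-1)^{n-k}$ by skew-symmetry of the original bracket, so the two $k$-th summands coincide. I expect this sign-and-position bookkeeping to be the main (if modest) obstacle; once it is handled, the sums agree term by term and axiom (3) follows.
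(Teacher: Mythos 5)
Your proof is correct. Note that the paper itself states this proposition without proof --- it is quoted directly from \cite{KMS} --- so there is no in-paper argument to compare against; your direct verification is the natural (and standard) one. In particular, the two key steps are handled properly: using that $\alpha,\beta$ are commuting algebra morphisms to rewrite every twisted bracket as the original bracket on arguments of the form $\alpha\beta(\cdot)$ (for axiom (2)) or $\alpha\beta^{2}(\cdot)$ (for axiom (3)), and then reconciling the Filippov identity with the BiHom-Filippov identity by moving the inner bracket from slot $k$ to slot $n$, which costs exactly $n-k$ adjacent transpositions and hence the factor $(-1)^{n-k}$ by total skew-symmetry of the original $n$-Lie bracket.
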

In this paper we are interested in  the derivations of this particular type of $ n $-BiHom-Lie algebras, we compare them to the derivations of the original Lie algebras, and study their inherited properties.


\begin{example}\label{example}
	Let $ V $ be a $ 4$-dimensional vector space with the basis $\{e_1,e_2,e_3,e_4\}$. Define the following brackets: $$ [e_1,e_2,e_3]=-e_4\ ; \quad [e_1,e_2,e_4]=e_3\ ; \quad [e_1,e_3,e_4]=-e_2\ ; \quad [e_2,e_3,e_4]=e_1. $$
	With this bracket, $ (V,[\cdot,\cdot,\cdot]) $ is a $ 3$-Lie algebra. Let $ \alpha $ and $ \beta $ be two linear maps of $ V $ defined by : $$ \alpha(e_{1})=-e_{2}\ ; \quad  \alpha(e_{2})=-e_{1}\ ; \quad   \alpha(e_{3})=-e_{4}\ ; \quad  \alpha(e_{4})=-e_{3} \quad \text{and}\ , \quad \beta=-\alpha.$$ 
	Let $ [x_{1},x_{2},x_{3} ]_{\alpha \beta}= [\alpha(x_{1}),\alpha(x_{2}),\beta(x_{3})] $, be a twisted bracket defined on $ V $.  Then it follows that $ (V,[\cdot,\cdot,\cdot]_{\alpha \beta},\alpha,\beta) $ is a $ 3$-BiHom-Lie algebra.
\end{example}

Recall that a subset $ \mathcal{S} \subseteq \mathfrak{g} $ is a subalgebra of $ (\mathfrak{g},[\cdot,\ldots,\cdot],\alpha,\beta) $ if $ \alpha(\mathcal{S}) \subseteq \mathcal{S}$, $ \beta(\mathcal{S}) \subseteq \mathcal{S}$ and $ [\mathcal{S},\mathcal{S},\ldots,\mathcal{S}]\subseteq \mathcal{S} $. We say that $\mathcal{S}$ is an ideal if $ \alpha(\mathcal{S}) \subseteq \mathcal{S}$, $ \beta(\mathcal{S}) \subseteq \mathcal{S}$ and $ [\mathcal{S},\mathcal{S},\ldots,\mathfrak{g}]\subseteq \mathcal{S} $.
\begin{definition}
	The \rm{center} of $ (\mathfrak{g},[\cdot,\ldots,\cdot],\alpha,\beta) $ is the set of $ u \in \mathfrak{g} $ such that \\ $ [u,x_{1},x_{2},\ldots,x_{n-1}]=0 $ for any $ x_{1},x_{2},\ldots,x_{n-1} \in \mathfrak{g} $. The center is an ideal of $ \mathfrak{g} $ which we will denote by $ Z(\mathfrak{g}) $.
\end{definition}
A more general definition of the center is the one involving the two morphisms $ \alpha $ and $ \beta $ and we will call it the $ (\alpha,\beta) $-center.
\begin{definition}
	The $ (\alpha,\beta) $-center of $ (\mathfrak{g},[\cdot,\ldots,\cdot],\alpha,\beta) $ is the set 
	$$ Z_{(\alpha,\beta)}(\mathfrak{g})=\{ u\in \mathfrak{g}, [u,\alpha\beta(x_{1}),\ldots,\alpha\beta(x_{n-1})]=0,\ for\  any\  x_{1},\ldots,x_{n-1} \in \mathfrak{g} \} $$
\end{definition}

\begin{example}
	A direct computation gives that the $(\alpha,\beta) $-center of the $3$-BiHom-Lie algebra given in Example~\ref{example} is trivial, that is  $ Z_{(\alpha,\beta)}(\mathfrak{g})=\{0\}$.  
\end{example}

\section{Derivations, $(  \alpha^{s},\beta^{r}) $-derivations and Generalized derivations}\label{Deriv}
\begin{definition}
	Let $ (\mathfrak{g}, [\cdot,\cdot,\cdot], \alpha, \beta ) $ be a $ 3 $-BiHom-Lie algebra. A  linear map  $D: \mathfrak{g} \rightarrow \mathfrak{g}$ is  a derivation if  for all  $x,y,z \in \mathfrak{g}$ :
	\begin{eqnarray}
	D([x,y,z])&=&[D(x),y,z]+ [x,D(y),z]+[x,y,D(z)] , 
	\end{eqnarray}
	and it is called   an $(  \alpha^{s},\beta^{r}) $-derivation of $ (\mathfrak{g}, [\cdot,\cdot,\cdot], \alpha, \beta ) $, if it satisfies :
	\begin{eqnarray}
	D \circ \alpha &=& \alpha \circ D ,  \text{ and }  D \circ \beta = \beta \circ D,\\
	D([x,y,z])&=&[D(x),\alpha^{s}\beta^{r}(y),\alpha^{s}\beta^{r}(z)]+ [\alpha^{s}\beta^{r}(x),D(y),\alpha^{s}\beta^{r}(z)]+ \nonumber\\
	&&[\alpha^{s}\beta^{r}(x),\alpha^{s}\beta^{r}(y),D(z)]\label{DRS} .  
	\end{eqnarray}
\end{definition}


Similarly, one can define  $(  \alpha^{s},\beta^{r}) $-derivations of   $ n $-BiHom-Lie algebras. Condition \eqref{DRS} becomes 

\begin{align*}
& D[x_{1},\ldots,x_{n} ] =
[D(x_{1}),\alpha^{s}\beta^{r}(x_{2}),\ldots,\alpha^{s}\beta^{r}(x_{n})]\\ 	& + \sum_{i=2}^{n}[\alpha^{s}\beta^{r}(x_{1}),\ldots,\alpha^{s}\beta^{r}(x_{i-1}),D(x_{i}),\alpha^{s}\beta^{r}(x_{i+1}),\ldots,\alpha^{s}\beta^{r}(x_{n})].
\end{align*}

Let   ${\rm Der}_{(  \alpha^{s},\beta^{r})}(\mathfrak{g})$ be the set of $(  \alpha^{s},\beta^{r}) $-derivations of $\mathfrak{g}$ and 
set \[{\rm Der}(\mathfrak{g}):=\bigoplus_{s \geq 0}\bigoplus_{r \geq 0} {\rm Der}_{(  \alpha^{s},\beta^{r})}(\mathfrak{g}).\]

We show that ${\rm Der}(\mathfrak{g})$  is equipped with a  Lie algebra structure. In fact, for $ D \in {\rm Der}_{(  \alpha^{s},\beta^{r})}(\mathfrak{g}) $ and $ D' \in {\rm Der}_{(  \alpha^{s'},\beta^{r'})}(\mathfrak{g}) $, we have $ [D,D'] \in {\rm Der}_{(  \alpha^{s+s'},\beta^{r+r'})}(\mathfrak{g}) $ , where $ [D,D'] $ is the standard commutator defined by $[D,D'] = DD'-D'D . $

\bigskip
A construction of the derivation extension of a multiplicative Hom-Lie algebra was given in \cite{Sheng}, in the next proposition, we generalize it to the case of BiHom-Lie algebras. 
Let $ (\mathfrak{g},[\cdot,\cdot]_{\mathfrak{g}},\alpha,\beta) $ be a BiHom-Lie algebra and let $D: \mathfrak{g} \rightarrow \mathfrak{g}$ be a linear map. Consider the vector space $ \mathcal{L}=\mathfrak{g}\oplus \mathbb{K} D $ and define the bracket $ [\cdot,\cdot]_{D} $ on $ \mathcal{L} $ by $ [u,v]_{D}=[u,v]_{\mathfrak{g}} $ and $ [u,D]_{D}=D(u) $, for any $ u,v \in \mathfrak{g} $. Also, define two morphisms of $ \mathcal{L}$ as follows: 
\[ \alpha_{D} (u,D)=(\alpha(u),D)\mbox{ and } \beta_{D} (u,D)=(\beta(u),D) \]
A direct computation gives the following proposition.
\begin{proposition}With the above definitions, 
	$ (\mathcal{L},[\cdot,\cdot]_{D},\alpha_{D},\beta_{D}) $ is a BiHom-Lie algebra if and only if $ D $ is a derivation of $ (\mathfrak{g},[\cdot,\cdot]_{\mathfrak{g}},\alpha,\beta) $.
\end{proposition}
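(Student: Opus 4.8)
The plan is to check, directly from the definition of a BiHom-Lie algebra specialized to $n=2$, that the three defining axioms hold for $(\mathcal{L},[\cdot,\cdot]_D,\alpha_D,\beta_D)$, and to locate the one family of instances that carries all the content and turns out to be equivalent to the Leibniz rule for $D$. First I would dispose of the commutativity axiom $\alpha_D\circ\beta_D=\beta_D\circ\alpha_D$. Since both $\alpha_D$ and $\beta_D$ fix the line $\mathbb{K}D$ and restrict to $\alpha$ and $\beta$ on $\mathfrak{g}$, this reduces immediately to $\alpha\beta=\beta\alpha$, which holds in $\mathfrak{g}$.

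For the skew-symmetry axiom and the BiHom-Jacobi identity I would use multilinearity of $[\cdot,\cdot]_D$ to restrict attention to arguments that are either vectors of $\mathfrak{g}$ or the distinguished element $D$, and then organize the verification according to the number of entries equal to $D$. When no entry is $D$, both axioms coincide verbatim with the corresponding axioms of $(\mathfrak{g},[\cdot,\cdot]_{\mathfrak{g}},\alpha,\beta)$ and hence hold. When two or more entries equal $D$, the surviving terms reduce to $[D,D]_D=0$ together with the evident pairwise cancellations, so these instances vanish automatically.

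The heart of the argument is the case of exactly one entry equal to $D$. For skew-symmetry, putting $x=u\in\mathfrak{g}$ and $y=D$ forces $[\beta_D(D),\alpha_D(u)]_D=-[\beta(u),D]_D=-D(\beta(u))$; this simultaneously pins down the value of the bracket with $D$ in the first slot and brings in the commutation relations $D\circ\alpha=\alpha\circ D$ and $D\circ\beta=\beta\circ D$. For the BiHom-Jacobi identity I would substitute $D$ into each of the three entries in turn, expand using $[u,D]_D=D(u)$, $\alpha_D(D)=\beta_D(D)=D$ and the bracket value just determined, and observe that the three resulting instances coincide. After using $D\alpha=\alpha D$ and $D\beta=\beta D$ to move the twists past $D$, this single remaining identity is equivalent to the Leibniz rule $D[u,v]_{\mathfrak{g}}=[D(u),v]_{\mathfrak{g}}+[u,D(v)]_{\mathfrak{g}}$, i.e. to $D$ being a derivation of $\mathfrak{g}$. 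Reading this equivalence in the reverse direction yields the converse: if $D$ is a derivation, every instance of the two axioms holds and $\mathcal{L}$ is a BiHom-Lie algebra.

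The step I expect to be the main obstacle is precisely this one-$D$ instance of the BiHom-Jacobi identity. The difficulty is entirely in the bookkeeping of the morphisms $\alpha$, $\beta$ and the iterate $\beta^2$ decorating the various entries of the three-term expression, together with the consistent use of the bracket $[D,\cdot]_D$ fixed by skew-symmetry; the commutation relations $D\alpha=\alpha D$ and $D\beta=\beta D$ must be invoked at exactly the right places so that the twisted three-term expression collapses onto the defining Leibniz identity of a derivation.
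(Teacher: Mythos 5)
The paper offers no proof of this proposition (it is prefaced by ``A direct computation gives\dots''), so your proposal has to stand on its own; its skeleton is the natural one, and several parts are correct: axiom (1) does reduce to $\alpha\beta=\beta\alpha$; the instances of BiHom-skew-symmetry and of the BiHom--Jacobi identity with no entry equal to $D$ reduce to the axioms of $\mathfrak{g}$; the instances with two or more entries equal to $D$ are vacuous once $[D,D]_D=0$ is imposed; and your observation that the three one-$D$ instances of the Jacobi identity collapse to a single identity is true. The first genuine gap is your claim that skew-symmetry ``brings in'' the relations $D\alpha=\alpha D$ and $D\beta=\beta D$. It does not: the instance with $(x,y)=(u,D)$ yields only $[D,\alpha(u)]_D=-D\beta(u)$. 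Those commutation relations are not part of the paper's definition of a derivation (only $(\alpha^{s},\beta^{r})$-derivations are required to commute with $\alpha,\beta$), and they do not follow from the BiHom-Lie axioms of $\mathcal{L}$ either; they would follow from multiplicativity of $\alpha_D,\beta_D$, but multiplicativity is not among the axioms being verified. So they are available neither in the ``if'' nor in the ``only if'' direction, yet your argument leans on them.

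The decisive gap is the asserted equivalence of the one-$D$ Jacobi instance with the untwisted Leibniz rule. Writing that instance out, with $(x_1,y_1,y_2)=(x,y,D)$, gives
\begin{equation*}
[D,[\beta(x),\alpha(y)]]_D=[\beta^{2}(y),D\beta(x)]-[\beta^{2}(x),D\beta(y)],
\end{equation*}
and here the twists do not cancel. First, the left side needs $[D,\cdot]_D$ on $[\beta(\mathfrak{g}),\alpha(\mathfrak{g})]$, where neither the paper's data nor skew-symmetry (which pins $[D,\cdot]_D$ down only on $\mathrm{Im}\,\alpha$) defines it, so the bracket must be completed before the axiom can even be checked. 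Second, converting the right side into ``$D$ of a bracket'' requires applications of BiHom-skew-symmetry, each of which inserts an extra $\alpha$ or $\beta$. If one completes the bracket in the only way compatible with skew-symmetry (say $\mathfrak{g}$ regular and multiplicative, $[D,w]_D:=-D\beta\alpha^{-1}(w)$) and grants $D\alpha=\alpha D$, $D\beta=\beta D$, the identity above is equivalent to
\begin{equation*}
D([u,v])=[D(u),\alpha(v)]+[\alpha(u),D(v)],
\end{equation*}
i.e.\ to $D$ being an $(\alpha^{1},\beta^{0})$-derivation, and not to $D([u,v])=[D(u),v]+[u,D(v)]$; the two conditions coincide when $\alpha=\mathrm{id}$ (the classical degeneration, which is what makes the plan look plausible) but not in general. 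So the step you yourself flag as the main obstacle is exactly where the proof breaks: carried out honestly, the computation does not produce the plain Leibniz rule, and a correct writeup must either keep the twists (arriving at a twisted-derivation statement) or add hypotheses under which they disappear.
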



Let $ (V,[\cdot,\ldots,\cdot]) $ be an $ n $-Lie algebra and $ (V,[\cdot,\ldots,\cdot]_{\alpha\beta},\alpha,\beta) $ the induced $ n $-BiHom-Lie algebra where $ \alpha,\ \beta $ are the two morphisms used for this induction.   A direct computation gives the following proposition

\begin{proposition}
	Any derivation of the $ n $-Lie algebra $ (V,[\cdot,\ldots,\cdot]) $ is a derivation of  its induced $ n $-BiHom-Lie algebra $ (V,[\cdot,\ldots,\cdot]_{\alpha\beta},\alpha,\beta) $ as well.
\end{proposition}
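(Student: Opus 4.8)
The plan is to verify directly that $D$ satisfies the Leibniz rule for the twisted bracket $[\cdot,\ldots,\cdot]_{\alpha\beta}$, that is,
\[
D\bigl([x_{1},\ldots,x_{n}]_{\alpha\beta}\bigr)=\sum_{i=1}^{n}[x_{1},\ldots,D(x_{i}),\ldots,x_{n}]_{\alpha\beta}
\]
for all $x_{1},\ldots,x_{n}\in V$. Throughout I will use that $D$ commutes with the two structure maps, $D\circ\alpha=\alpha\circ D$ and $D\circ\beta=\beta\circ D$; this compatibility is exactly what is needed to transport a derivation of the untwisted algebra $(V,[\cdot,\ldots,\cdot])$ onto its induced $n$-BiHom-Lie structure.

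First I would unfold the left-hand side through the definition $[x_{1},\ldots,x_{n}]_{\alpha\beta}=[\alpha(x_{1}),\ldots,\alpha(x_{n-1}),\beta(x_{n})]$, turning it into $D\bigl([\alpha(x_{1}),\ldots,\alpha(x_{n-1}),\beta(x_{n})]\bigr)$. Since $D$ is a derivation of the ordinary $n$-Lie bracket, I can apply the usual Leibniz rule to this expression, producing a sum of $n$ terms in which $D$ successively hits each slot of $[\alpha(x_{1}),\ldots,\alpha(x_{n-1}),\beta(x_{n})]$.

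Next I would push $D$ past the structure maps in each summand: in the first $n-1$ slots $D\alpha(x_{i})=\alpha D(x_{i})$, and in the last slot $D\beta(x_{n})=\beta D(x_{n})$. After this substitution the $i$-th term becomes $[\alpha(x_{1}),\ldots,\alpha D(x_{i}),\ldots,\beta(x_{n})]$ for $i<n$ and $[\alpha(x_{1}),\ldots,\alpha(x_{n-1}),\beta D(x_{n})]$ for $i=n$, each of which is precisely $[x_{1},\ldots,D(x_{i}),\ldots,x_{n}]_{\alpha\beta}$ by the definition of the induced bracket. Summing recovers the right-hand side, completing the verification.

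The computation is routine; the single point that carries the whole argument is the commutativity of $D$ with $\alpha$ and $\beta$, without which $D\alpha(x_{i})$ cannot be rewritten as $\alpha D(x_{i})$ and the $n$ terms fail to reassemble into twisted brackets. So the main thing to pin down is that the derivations of $(V,[\cdot,\ldots,\cdot])$ under consideration are those compatible with the chosen morphisms $\alpha$ and $\beta$; granting this, the Leibniz rule alone suffices and no appeal to the fundamental identity is needed.
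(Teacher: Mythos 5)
Your argument is precisely the ``direct computation'' that the paper invokes without writing out: expand $[\cdot,\ldots,\cdot]_{\alpha\beta}$ by its definition, apply the Leibniz rule of the original $n$-Lie algebra, and push $D$ past $\alpha$ in the first $n-1$ slots and past $\beta$ in the last slot so that the $n$ summands reassemble into twisted brackets. In method, you and the paper agree completely.

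The substantive point is the hypothesis you flagged at the end, and you are right to insist on it: the proposition as printed quantifies over \emph{any} derivation of $(V,[\cdot,\ldots,\cdot])$, with no compatibility with $\alpha$ and $\beta$ required, and in that generality the statement is false; the commutation assumption is not a convenience of your proof but a missing hypothesis of the statement. A concrete counterexample for $n=3$: let $V$ have basis $\{x_1,x_2,x_3,z\}$ with $[x_1,x_2,x_3]=z$ and all brackets involving $z$ equal to zero (this is a $3$-Lie algebra, since every bracket is central, so both sides of the fundamental identity vanish); let $\alpha(x_1)=2x_1$, $\alpha(x_2)=x_2$, $\alpha(x_3)=x_3$, $\alpha(z)=2z$ and $\beta=\mathrm{id}$, two commuting morphisms; and let $D(x_1)=x_2$, $D(x_2)=D(x_3)=D(z)=0$, which is a derivation of $(V,[\cdot,\cdot,\cdot])$ but does not commute with $\alpha$. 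Then $D\bigl([x_1,x_3,x_1]_{\alpha\beta}\bigr)=D([2x_1,x_3,x_1])=0$, while the Leibniz sum for the induced bracket is $[x_2,x_3,x_1]_{\alpha\beta}+[x_1,0,x_1]_{\alpha\beta}+[x_1,x_3,x_2]_{\alpha\beta}=[x_2,x_3,x_1]+[2x_1,x_3,x_2]=z-2z=-z\neq 0$, so $D$ is not a derivation of $(V,[\cdot,\cdot,\cdot]_{\alpha\beta},\alpha,\beta)$. With the hypotheses $D\circ\alpha=\alpha\circ D$ and $D\circ\beta=\beta\circ D$ added --- which is how the proposition must be read --- your verification is complete and correct, and it even yields the stronger conclusion that $D$ is an $(\alpha^{0},\beta^{0})$-derivation of the induced algebra in the paper's sense.
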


\begin{definition}
	Let $ (\mathfrak{g},[\cdot,\ldots,\cdot],\alpha,\beta) $ be an $ n $-BiHom-Lie algebra and let $ D$ be an endomorphism of $\mathfrak{g} $. The linear map  $ D $ is called a generalized $(  \alpha^{s},\beta^{r}) $-derivation of $ \mathfrak{g} $ if there exists   $ D^{(i)} $, $ i\in \{1,\ldots,n\} $, a family of endomorphisms of $\mathfrak{g}$,  such that
	
	\begin{align*}
		& D \circ \alpha = \alpha \circ D;\  D \circ \beta = \beta \circ D \\
		& D^{(i)} \circ \alpha = \alpha \circ D^{(i)};\  D^{(i)} \circ \beta = \beta \circ D^{(i)}\  for\  any\  i,\ and \\ 
		& D^{(n)}([x_{1},\ldots,x_{n} ] )=
		[D(x_{1}),\alpha^{s}\beta^{r}(x_{2}),\ldots,\alpha^{s}\beta^{r}(x_{n})]\\ 	&+ \sum_{i=2}^{n}[\alpha^{s}\beta^{r}(x_{1}),\ldots,\alpha^{s}\beta^{r}(x_{i-1}),D^{(i-1)}(x_{i}),\alpha^{s}\beta^{r}(x_{i+1}),\ldots,\alpha^{s}\beta^{r}(x_{n})]
	\end{align*}
	for any $ x_{1},\ldots,x_{n}\in \mathfrak{g}. $
\end{definition}

The set of generalized $(  \alpha^{s},\beta^{r}) $-derivations of $ \mathfrak{g} $ is $ {\rm GDer}_{(  \alpha^{s},\beta^{r})}(\mathfrak{g}) $ and as for ${\rm  Der(\mathfrak{g})}, $  we denote  \[{\rm GDer}(\mathfrak{g}):=\bigoplus_{s \geq 0}\bigoplus_{r \geq 0} {\rm GDer}_{(  \alpha^{s},\beta^{r})}(\mathfrak{g}).\]

\begin{definition}
	An endomorphism $ D$ of   an $ n $-BiHom-Lie algebra $\mathfrak{g}$  is called a $(  \alpha^{s},\beta^{r}) $-quasiderivation if there exists an endomorphism  $ D' $ of $\mathfrak{g}$ such that 
	\begin{align*}
	& D \circ \alpha = \alpha \circ D;\  D \circ \beta = \beta \circ D \\
	& D' \circ \alpha = \alpha \circ D';\  D' \circ \beta = \beta \circ D'\  for\  every\  i,\ and \\ 
	&D'([x_{1},\ldots,x_{n}])=\sum_{i=1}^{n}[\alpha^{s}\beta^{r}(x_{1}),\ldots,\alpha^{s}\beta^{r}(x_{i-1}),D(x_{i}),\alpha^{s}\beta^{r}(x_{i+1}),\ldots,\alpha^{s}\beta^{r}(x_{n})]
	\end{align*}
	for any $ x_{1},\ldots,x_{n}\in \mathfrak{g}. $
\end{definition}
We then define 
\[{\rm QDer}(\mathfrak{g}):=\bigoplus_{s \geq 0}\bigoplus_{r \geq 0} {\rm QDer}_{(  \alpha^{s},\beta^{r})}(\mathfrak{g}).\] 

\begin{proposition}\label{prop1}
	Let $ (\mathfrak{g},[\cdot,\ldots,\cdot],\alpha,\beta) $ be a regular $n$-BiHom-Lie algebra with trivial center. Suppose that $ \mathfrak{g} = \mathcal{I} \oplus \mathcal{J} $, where $\mathcal{I}$ and  $\mathcal{J} $ are ideals of $ \mathfrak{g} $, then :
	$$ GDer(\mathfrak{g}) = GDer(\mathcal{I}) \oplus GDer(\mathcal{J}). $$
\end{proposition}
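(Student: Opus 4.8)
The plan is to establish the two inclusions $GDer(\mathfrak{g}) \subseteq GDer(\mathcal{I}) \oplus GDer(\mathcal{J})$ and $GDer(\mathcal{I}) \oplus GDer(\mathcal{J}) \subseteq GDer(\mathfrak{g})$, working at a fixed bidegree $(s,r)$ and summing over $s,r$ at the end. The standing observation, used everywhere, is that since $\mathcal{I}$ and $\mathcal{J}$ are ideals with $\mathcal{I} \cap \mathcal{J} = \{0\}$, every bracket containing an argument from $\mathcal{I}$ and an argument from $\mathcal{J}$ must lie in $\mathcal{I} \cap \mathcal{J}$ and hence vanishes. Moreover $\alpha, \beta$ (and therefore $\alpha^s\beta^r$) preserve each summand, so they are block-diagonal for $\mathfrak{g} = \mathcal{I} \oplus \mathcal{J}$, and regularity makes each block invertible on the corresponding ideal. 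I denote by $\pi_{\mathcal{I}}, \pi_{\mathcal{J}}$ the two projections, which then commute with $\alpha$ and $\beta$.

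The heart of the proof is to show that any $D \in GDer_{(\alpha^s,\beta^r)}(\mathfrak{g})$ is itself block-diagonal, i.e. $D(\mathcal{I}) \subseteq \mathcal{I}$ and $D(\mathcal{J}) \subseteq \mathcal{J}$. Fix $x \in \mathcal{I}$ and write $D(x) = u + v$ with $u \in \mathcal{I}$ and $v \in \mathcal{J}$; the aim is $v = 0$. I would evaluate the defining identity of a generalized derivation on $(x, x_2, \ldots, x_n)$ with $x_2, \ldots, x_n \in \mathcal{J}$. The left-hand side is $D^{(n)}$ applied to the mixed bracket $[x, x_2, \ldots, x_n] = 0$, hence $0$. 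On the right-hand side, each summand indexed by $i \geq 2$ carries $\alpha^s\beta^r(x) \in \mathcal{I}$ in the first slot together with at least one further entry $\alpha^s\beta^r(x_j) \in \mathcal{J}$, so it is a mixed bracket and vanishes; the first summand collapses to $[v, \alpha^s\beta^r(x_2), \ldots, \alpha^s\beta^r(x_n)]$, the $u$-contribution being killed for the same reason. Thus $[v, \alpha^s\beta^r(x_2), \ldots, \alpha^s\beta^r(x_n)] = 0$ for all $x_j \in \mathcal{J}$. Regularity lets the arguments $\alpha^s\beta^r(x_j)$ range over all of $\mathcal{J}$, and any entry from $\mathcal{I}$ would again produce a mixed bracket, so $[v, z_2, \ldots, z_n] = 0$ for all $z_2, \ldots, z_n \in \mathfrak{g}$; that is, $v \in Z(\mathfrak{g}) = \{0\}$, whence $v = 0$ and $D(x) \in \mathcal{I}$. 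Exchanging the roles of $\mathcal{I}$ and $\mathcal{J}$ gives $D(\mathcal{J}) \subseteq \mathcal{J}$. I expect this step to be the main obstacle, as it is the only place where regularity and triviality of the center are both genuinely needed.

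With block-diagonality of $D$ in hand, the inclusion $\subseteq$ follows quickly. For $D \in GDer_{(\alpha^s,\beta^r)}(\mathfrak{g})$ with witnessing family $D^{(1)}, \ldots, D^{(n)}$, I would set $E^{(i)} := \pi_{\mathcal{I}} \circ D^{(i)}|_{\mathcal{I}}$; these are endomorphisms of $\mathcal{I}$ commuting with $\alpha, \beta$. Restricting the defining identity to arguments $x_1, \ldots, x_n \in \mathcal{I}$, and observing that the $\mathcal{J}$-component of each $D^{(i-1)}(x_i)$ drops out (it sits in a bracket all of whose other entries lie in $\mathcal{I}$, hence a mixed bracket), shows that $D|_{\mathcal{I}}$ together with $\{E^{(i)}\}$ satisfies the generalized $(\alpha^s,\beta^r)$-derivation identity for $\mathcal{I}$; likewise for $\mathcal{J}$. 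Since $D = D|_{\mathcal{I}} \oplus D|_{\mathcal{J}}$ by block-diagonality, this yields $D \in GDer(\mathcal{I}) \oplus GDer(\mathcal{J})$. Conversely, given generalized derivations $d$ of $\mathcal{I}$ and $d'$ of $\mathcal{J}$ with their families, I would extend them by zero to $\mathfrak{g} = \mathcal{I} \oplus \mathcal{J}$; the commutation relations with $\alpha, \beta$ hold blockwise, and when the defining identity of $\mathfrak{g}$ is tested, the vanishing of all mixed brackets makes it split into the separate identities for $\mathcal{I}$ and $\mathcal{J}$, so the extension lies in $GDer_{(\alpha^s,\beta^r)}(\mathfrak{g})$. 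Taking the direct sum over $s, r \geq 0$ then gives the asserted equality $GDer(\mathfrak{g}) = GDer(\mathcal{I}) \oplus GDer(\mathcal{J})$.
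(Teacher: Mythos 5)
Your proof is correct, and its core rests on the same key lemma as the paper's proof: every $D \in {\rm GDer}_{(\alpha^s,\beta^r)}(\mathfrak{g})$ must preserve the two ideals, which is forced by the generalized-derivation identity together with regularity, the vanishing of mixed brackets, and $Z(\mathfrak{g})=\{0\}$. The execution of that lemma differs, though. The paper fixes $u \in \mathcal{I}$, writes $D(u)=a+b$ with $b \in \mathcal{J}$, and for \emph{arbitrary} $y_1,\ldots,y_{n-1} \in \mathfrak{g}$ (written as $\alpha^s\beta^r(x_i)$ by regularity) shows that $[b,y_1,\ldots,y_{n-1}]$ lies in $\mathcal{J}$ by the ideal property and in $\mathcal{I}$ by expanding $[D(u),y_1,\ldots,y_{n-1}]$ through the derivation identity and reducing $[u,x_1,\ldots,x_{n-1}]$ to its all-$\mathcal{I}$ part; the double containment gives zero. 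You instead evaluate the identity on $(x,x_2,\ldots,x_n)$ with $x_2,\ldots,x_n \in \mathcal{J}$, so that both sides collapse at once and you read off $[v,\alpha^s\beta^r(x_2),\ldots,\alpha^s\beta^r(x_n)]=0$ directly; this is a cleaner route to the same conclusion. You also supply two steps the paper leaves implicit: the restriction $D|_{\mathcal{I}}$ needs the \emph{projected} witnessing family $E^{(i)}=\pi_{\mathcal{I}}\circ D^{(i)}|_{\mathcal{I}}$ (the $D^{(i)}$ themselves need not preserve $\mathcal{I}$), and the reverse inclusion ${\rm GDer}(\mathcal{I})\oplus {\rm GDer}(\mathcal{J}) \subseteq {\rm GDer}(\mathfrak{g})$ via extension by zero; the paper proves only $D(\mathcal{I})\subseteq \mathcal{I}$ and asserts the rest.

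One small point to repair: your claim that every summand with $i \geq 2$ is a mixed bracket relies on the existence of a slot $j \notin \{1,i\}$ carrying $\alpha^s\beta^r(x_j)\in\mathcal{J}$, which exists only when $n \geq 3$. For $n=2$ the summand $[\alpha^s\beta^r(x),D^{(1)}(x_2)]$ need not vanish. The fix is immediate and uniform in $n$, and is essentially the paper's observation: each such summand lies in $\mathcal{I}$ because its first slot $\alpha^s\beta^r(x)$ lies in the ideal $\mathcal{I}$, while $[v,\alpha^s\beta^r(x_2),\ldots,\alpha^s\beta^r(x_n)]$ lies in $\mathcal{J}$; since the total is zero, that bracket lies in $\mathcal{I}\cap\mathcal{J}=\{0\}$, and the rest of your argument proceeds unchanged.
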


\begin{proof}
	To prove the proposition, first we will show that for any $ D \in GDer(\mathfrak{g}) $, we have $ D(\mathcal{I})\subset \mathcal{I} $ and $ D(\mathcal{J})\subset \mathcal{J} $, then it follows that the restriction of $ D $ to $ \mathcal{I} $ (resp. $ \mathcal{J} $ ) is a generalized derivation of $ \mathcal{I} $ (resp. $ \mathcal{J} $). Let $ u \in \mathcal{I} $ and let $ D(u)=a+b $, $ a\in \mathcal{I} $, $ b\in \mathcal{J} $ be the decomposition of $ D(u) $. For any $ y_{1},\ldots,y_{n-1}\in \mathfrak{g} $, we have $ [b,y_{1},\ldots,y_{n-1}] \in \mathcal{J} $. On the other hand,
	\[ [b,y_{1},\ldots,y_{n-1}]=[D(u)-a,y_{1},\ldots,y_{n-1}]= [D(u),y_{1},\ldots,y_{n-1}] -[a,y_{1},\ldots,y_{n-1}]\] since $ \mathcal{I} $ is an ideal and $ a\in \mathcal{I} $, so $ [a,y_{1},\ldots,y_{n-1}] \in \mathcal{I}  $. Moreover, for each $ 1 \leq i \leq n-1$, let $ y_{i}=\alpha^{s}\beta^{r} (x_{i})$, then:
	\begin{small}
	\begin{align*}
	& [D(u),y_{1},\ldots,y_{n-1}]=[D(u),\alpha^{s}\beta^{r} (x_{1}),\ldots,\alpha^{s}\beta^{r} (x_{n-1})] \\
	&=D^{(n)}[u,x_{1},\ldots,x_{n-1}]-\sum_{i=1}^{n-1}[\alpha^{s}\beta^{r}(u),\alpha^{s}\beta^{r}(x_{1}),\ldots,D^{(i)}(x_{i}),\alpha^{s}\beta^{r}(x_{i+1}),\ldots,\alpha^{s}\beta^{r}(x_{n-1})] .
	\end{align*}
	\end{small}
	For every $ i $, $ [\alpha^{s}\beta^{r}(u),\alpha^{s}\beta^{r}(x_{1}),\ldots,D^{(i)}(x_{i}),\alpha^{s}\beta^{r}(x_{i+1}),\ldots,\alpha^{s}\beta^{r}(x_{n-1})] \in \mathcal{I} $, \\
	so  $ \sum_{i=1}^{n-1}[\alpha^{s}\beta^{r}(u),\alpha^{s}\beta^{r}(x_{1}),\ldots,D^{(i)}(x_{i}),\alpha^{s}\beta^{r}(x_{i+1}),\ldots,\alpha^{s}\beta^{r}(x_{n-1})] \in \mathcal{I} $.\\
	Now let $ x_{i}=a_{i}+b_{i} $ be the decomposition of $ x_{i} $,
	\begin{eqnarray*}[u,x_{1},\ldots,x_{n-1}]=[u,a_{1}+b_{1},\ldots,a_{n-1}+b_{n-1}]=\\  \ [u,a_{1}+b_{1},\ldots,a_{n-1}]+[u,a_{1}+b_{1},\ldots,b_{n-1}]
	\end{eqnarray*}
	but $ [u,a_{1}+b_{1},\ldots,b_{n-1}] \in \mathcal{I} \cap \mathcal{J} =\{0\} $, so 
	\[[u,x_{1},\ldots,x_{n-1}]=[u,a_{1}+b_{1},\ldots,a_{n-2}+b_{n-2},a_{n-1}]. \]
	Similarly, $ [u,a_{1}+b_{1},\ldots,b_{n-2},a_{n-1}] =0$. Thus, 
	\[[u,x_{1},\ldots,x_{n-1}]=[u,a_{1},\ldots,a_{n-2},a_{n-1}]. \]
	Therefore, 
	\begin{align*}
	&D^{(n)}[u,x_{1},\ldots,x_{n-1}]= D^{(n)}[u,a_{1},\ldots,a_{n-1}]=[D(u),\alpha^{s}\beta^{r}(a_{1}),\ldots,\alpha^{s}\beta(a_{n-1})] \\& 
	+\sum_{i=1}^{n-1}[\alpha^{s}\beta^{r}(u),\alpha^{s}\beta^{r}(a_{1}),\ldots,D^{(i)}(a_{i}),\alpha^{s}\beta^{r}(a_{i+1}),\ldots,\alpha^{s}\beta^{r}(a_{n-1})] \in \mathcal{I}.
	\end{align*}
	Then $ [D(u),y_{1},\ldots,y_{n-1}] \in \mathcal{I} $ and so is $ [b,y_{1},\ldots,y_{n-1}] $.\\  Hence $ [b,y_{1},\ldots,y_{n-1}] \in \mathcal{I} \cap \mathcal{J}.  $ \\ÊWe conclude that $ b\in Z(\mathfrak{g})=\{0\} $ and so $ D(\mathcal{I})\subset \mathcal{I}. $
\end{proof}

\begin{remark}
	Since any derivation, quasiderivation is a generalized derivation:  \[{\rm Der}(\mathfrak{g}) \subseteq {\rm QDer(\mathfrak{g})} \subseteq {\rm GDer(\mathfrak{g})}. \] Hence  Proposition \ref{prop1} holds for ${\rm QDer(\mathfrak{g})}$ and ${\rm Der(\mathfrak{g})}$ as well, that is 
	$${\rm  Der(\mathfrak{g})} = {\rm Der}(\mathcal{I}) \oplus {\rm Der}(\mathcal{J}), $$
	and $$ {\rm QDer}(\mathfrak{g}) = {\rm QDer}(\mathcal{I}) \oplus {\rm QDer}(\mathcal{J}). $$
\end{remark}

\begin{definition}
	A linear map $ D $ is called an $(  \alpha^{s},\beta^{r}) $-central derivation of $ \mathfrak{g} $ if it satisfies 
	\[D([x_{1},\ldots,x_{n}])=[\alpha^{s}\beta^{r}(x_{1}),\ldots,\alpha^{s}\beta^{r}(x_{i-1}),D(x_{i}),\alpha^{s}\beta^{r}(x_{i+1}),\ldots,\alpha^{s}\beta^{r}(x_{n})]=0,\] 
	for all $ i \in \{1,\ldots,n\} .$
\end{definition}
The set of $(  \alpha^{s},\beta^{r}) $-central derivations is denoted by $ {\rm ZDer}_{(  \alpha^{s},\beta^{r})}(\mathfrak{g}) $ and we set 
\[{\rm ZDer}(\mathfrak{g}):=\bigoplus_{s \geq 0}\bigoplus_{r \geq 0} {\rm ZDer}_{(  \alpha^{s},\beta^{r})}(\mathfrak{g}).\] 

\begin{definition}
	The $(  \alpha^{s},\beta^{r}) $-centroid of $ (\mathfrak{g},[\cdot,\ldots,\cdot],\alpha,\beta) $ denoted by $ C_{(  \alpha^{s},\beta^{r})} (\mathfrak{g})$ is the set of linear maps $ D $ satisfying:
	\[D([x_{1},\ldots,x_{n}])=[\alpha^{s}\beta^{r}(x_{1}),\ldots,\alpha^{s}\beta^{r}(x_{i-1}),D(x_{i}),\alpha^{s}\beta^{r}(x_{i+1}),\ldots,\alpha^{s}\beta^{r}(x_{n})]\] for all $ i \in \{1,\ldots,n\} .$ We set 
	
	\[{\rm C }(\mathfrak{g}):=\bigoplus_{s \geq 0}\bigoplus_{r \geq 0} {\rm C}_{(  \alpha^{s},\beta^{r})}(\mathfrak{g}).\] 
\end{definition}
\begin{proposition}
	For any $ r,s $, we have $$  {\rm ZDer}_{(  \alpha^{s},\beta^{r})}(\mathfrak{g})= {\rm Der}_{(  \alpha^{s},\beta^{r})}(\mathfrak{g}) \cap  {\rm C}_{(  \alpha^{s},\beta^{r})} (\mathfrak{g}). $$ 
\end{proposition}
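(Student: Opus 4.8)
The claim is an equality of sets, so the natural plan is to prove mutual inclusion. The key observation is that the three relevant definitions are built from exactly two elementary conditions, and the goal is simply to recognize that the central-derivation condition is the conjunction of the derivation condition and the centroid condition.

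\medskip
\noindent\textbf{Plan.}
First I would unpack the definitions. An $(\alpha^{s},\beta^{r})$-central derivation $D$ satisfies the compound equation
\[
D([x_{1},\ldots,x_{n}])=[\alpha^{s}\beta^{r}(x_{1}),\ldots,D(x_{i}),\ldots,\alpha^{s}\beta^{r}(x_{n})]=0
\]
for every $i$; that is, both $D([x_{1},\ldots,x_{n}])=0$ and each individual term $[\alpha^{s}\beta^{r}(x_{1}),\ldots,D(x_{i}),\ldots,\alpha^{s}\beta^{r}(x_{n})]=0$. An element of $C_{(\alpha^{s},\beta^{r})}(\mathfrak{g})$ satisfies $D([x_{1},\ldots,x_{n}])=[\alpha^{s}\beta^{r}(x_{1}),\ldots,D(x_{i}),\ldots,\alpha^{s}\beta^{r}(x_{n})]$ for every $i$ (with no vanishing requirement), and an element of $\mathrm{Der}_{(\alpha^{s},\beta^{r})}(\mathfrak{g})$ satisfies $D([x_{1},\ldots,x_{n}])=\sum_{i=1}^{n}[\alpha^{s}\beta^{r}(x_{1}),\ldots,D(x_{i}),\ldots,\alpha^{s}\beta^{r}(x_{n})]$. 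All three classes share the commutation conditions $D\circ\alpha=\alpha\circ D$ and $D\circ\beta=\beta\circ D$, so these are inherited throughout and need only be noted once.

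\medskip
\noindent\textbf{Key steps.}
For the inclusion $\mathrm{ZDer}\subseteq\mathrm{Der}\cap C$, I would take $D\in\mathrm{ZDer}_{(\alpha^{s},\beta^{r})}(\mathfrak{g})$. Since every individual term $[\alpha^{s}\beta^{r}(x_{1}),\ldots,D(x_{i}),\ldots,\alpha^{s}\beta^{r}(x_{n})]$ vanishes, their sum vanishes and equals $D([x_{1},\ldots,x_{n}])=0$, giving the derivation identity; likewise each such term equals $D([x_{1},\ldots,x_{n}])=0$, giving the centroid identity. For the reverse inclusion $\mathrm{Der}\cap C\subseteq\mathrm{ZDer}$, I would take $D$ lying in both sets. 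The centroid condition says each of the $n$ terms equals the common value $D([x_{1},\ldots,x_{n}])$; substituting this into the derivation identity yields $D([x_{1},\ldots,x_{n}])=n\,D([x_{1},\ldots,x_{n}])$, whence $(n-1)D([x_{1},\ldots,x_{n}])=0$. Over a field of characteristic zero (or of characteristic not dividing $n-1$) this forces $D([x_{1},\ldots,x_{n}])=0$, and then the centroid identity forces each term to vanish as well, so $D\in\mathrm{ZDer}$.

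\medskip
\noindent\textbf{Main obstacle.}
The one genuine subtlety is the characteristic assumption hidden in the step $(n-1)D([x_{1},\ldots,x_{n}])=0\Rightarrow D([x_{1},\ldots,x_{n}])=0$. This is automatic in characteristic zero, which is the standing assumption of the theory, but it should be flagged; in characteristic $p$ dividing $n-1$ the argument breaks and the two sides of the claimed equality need not coincide. Beyond this the proof is purely formal manipulation of the defining identities, so I would keep it short and emphasize the $n$-versus-$1$ counting that converts the single-summed derivation relation into the term-by-term vanishing of the central-derivation relation.
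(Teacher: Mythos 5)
Your proof is correct and takes essentially the same route as the paper: mutual inclusion, with the reverse direction obtained by substituting the centroid relation into the derivation relation to get $D([x_{1},\ldots,x_{n}])=n\,D([x_{1},\ldots,x_{n}])$ and hence vanishing. Your explicit flagging of the characteristic assumption needed for the step $(n-1)D([x_{1},\ldots,x_{n}])=0\Rightarrow D([x_{1},\ldots,x_{n}])=0$ is a point the paper leaves implicit, but it does not alter the argument.
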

\begin{proof}
	It is clear that $ {\rm ZDer}_{(  \alpha^{s},\beta^{r})}(\mathfrak{g}) \subseteq {\rm Der}_{(  \alpha^{s},\beta^{r})}(\mathfrak{g})$ and $ {\rm ZDer}_{(  \alpha^{s},\beta^{r})}(\mathfrak{g}) \subseteq C_{(  \alpha^{s},\beta^{r})}(\mathfrak{g})$. Conversely, let $ D \in  {\rm Der}_{(  \alpha^{s},\beta^{r})}(\mathfrak{g}) \cap  {\rm C}_{(  \alpha^{s},\beta^{r})} (\mathfrak{g}) $,  so  for each $ i $ we have 
	\[D([x_{1},\ldots,x_{n}])=[\alpha^{s}\beta^{r}(x_{1}),\ldots,\alpha^{s}\beta^{r}(x_{i-1}),D(x_{i}),\alpha^{s}\beta^{r}(x_{i+1}),\ldots,\alpha^{s}\beta^{r}(x_{n})].\]
	In addition, 
	\[D([x_{1},\ldots,x_{n}])= \sum_{i=1}^{n}[\alpha^{s}\beta^{r}(x_{1}),\ldots,\alpha^{s}\beta^{r}(x_{i-1}),D(x_{i}),\alpha^{s}\beta^{r}(x_{i+1}),\ldots,\alpha^{s}\beta^{r}(x_{n})].\]
	Then $ D([x_{1},\ldots,x_{n}]) =nD([x_{1},\ldots,x_{n}])$. \\ Thus $ D([x_{1},\ldots,x_{n}])=0 $ and $ D\in {\rm  ZDer}_{(  \alpha^{s},\beta^{r})}(\mathfrak{g}) .$	 
\end{proof}

\begin{definition}
	The $(  \alpha^{s},\beta^{r}) $-quasicentroid $ {\rm QC}_{(  \alpha^{s},\beta^{r})} (\mathfrak{g})$ is the set of linear maps $ D $ such that 
	\begin{small}
	\[[D(x_{1}),\alpha^{s}\beta^{r}(x_{2})\ldots,\alpha^{s}\beta^{r}(x_{n})]=[\alpha^{s}\beta^{r}(x_{1}),\ldots,\alpha^{s}\beta^{r}(x_{i-1}),D(x_{i}),\alpha^{s}\beta^{r}(x_{i+1}),\ldots,\alpha^{s}\beta^{r}(x_{n})]\]
	\end{small} for all $ i \in \{1,\ldots,n\} .$ We set 
	\[{\rm QC}(\mathfrak{g}):=\bigoplus_{s \geq 0}\bigoplus_{r \geq 0} {\rm QC}_{(  \alpha^{s},\beta^{r})}(\mathfrak{g}).\] 
\end{definition}

\begin{lemma} \label{lem1}
	Let $ (\mathfrak{g}, [\cdot,\ldots,\cdot],\alpha,\beta) $ be an $ n $-BiHom-Lie algebra.
	
	$ (1) $ \quad $ [{\rm Der}(\mathfrak{g}),{\rm C}(\mathfrak{g})]\subseteq {\rm C}(\mathfrak{g}); $
	
	$(2)$ \quad ${\rm C}(\mathfrak{g}) \oplus {\rm Der}(\mathfrak{g})\subseteq {\rm Der}(\mathfrak{g}).$
\end{lemma}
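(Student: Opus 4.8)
The plan is to work one homogeneous component at a time. Fix $D\in{\rm Der}_{(\alpha^{s},\beta^{r})}(\mathfrak{g})$ and $D'\in{\rm C}_{(\alpha^{s'},\beta^{r'})}(\mathfrak{g})$ and abbreviate $\gamma=\alpha^{s}\beta^{r}$, $\gamma'=\alpha^{s'}\beta^{r'}$ and $\delta=\gamma\gamma'=\alpha^{s+s'}\beta^{r+r'}$. Since $D$ and $D'$ both commute with $\alpha$ and $\beta$, they commute with $\gamma,\gamma',\delta$, and hence $[D,D']$ and $D'D$ commute with $\alpha$ and $\beta$; so in both parts only the bracket identity has to be checked, with twisting map $\delta$ and degree $(s+s',r+r')$. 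The whole argument rests on using the centroid identity for $D'$ in two directions: \emph{forward}, to pull $D'$ out of a single slot, and \emph{backward}, to fold a $\gamma'$-twisted bracket back into $D'$ of a bracket.

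First I would record the key computation for part $(2)$, since it also feeds part $(1)$. Apply $D'$ term by term to the derivation expansion
\[ D[x_{1},\ldots,x_{n}]=\sum_{i=1}^{n}[\gamma(x_{1}),\ldots,D(x_{i}),\ldots,\gamma(x_{n})]. \]
Using the centroid identity for $D'$ in the $i$-th slot on the $i$-th summand, each term becomes $[\delta(x_{1}),\ldots,D'D(x_{i}),\ldots,\delta(x_{n})]$, so that
\[ D'D[x_{1},\ldots,x_{n}]=\sum_{i=1}^{n}[\delta(x_{1}),\ldots,D'D(x_{i}),\ldots,\delta(x_{n})]. \]
This is exactly the $(\alpha^{s+s'},\beta^{r+r'})$-derivation identity for $D'D$, which, together with the commutation already noted, gives $D'\circ D\in{\rm Der}_{(\alpha^{s+s'},\beta^{r+r'})}(\mathfrak{g})$; summing over degrees yields ${\rm C}(\mathfrak{g})\circ{\rm Der}(\mathfrak{g})\subseteq{\rm Der}(\mathfrak{g})$, which is how I read the inclusion in $(2)$ (the symbol denoting composition of operators).

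For part $(1)$ I would then expand $DD'$. Writing $D'[x_{1},\ldots,x_{n}]=[D'(x_{1}),\gamma'(x_{2}),\ldots,\gamma'(x_{n})]$ (the centroid identity in the first slot) and applying the derivation $D$, the slot-$1$ summand gives $[DD'(x_{1}),\delta(x_{2}),\ldots,\delta(x_{n})]$, while each cross term $[\gamma D'(x_{1}),\ldots,\gamma'D(x_{j}),\ldots,\delta(x_{n})]$, $j\ge2$, is recognised by the centroid identity \emph{in reverse} as $D'[\gamma(x_{1}),\ldots,D(x_{j}),\ldots,\gamma(x_{n})]$. Summing these cross terms and comparing with the expansion of $D$ gives $\sum_{j\ge2}(\cdots)=D'D[x_{1},\ldots,x_{n}]-[D'D(x_{1}),\delta(x_{2}),\ldots,\delta(x_{n})]$, whence
\[ DD'[x_{1},\ldots,x_{n}]=[DD'(x_{1}),\delta(x_{2}),\ldots,\delta(x_{n})]-[D'D(x_{1}),\delta(x_{2}),\ldots,\delta(x_{n})]+D'D[x_{1},\ldots,x_{n}]. \]
Subtracting the formula for $D'D$ collapses everything to $[D,D'][x_{1},\ldots,x_{n}]=[\,[D,D'](x_{1}),\delta(x_{2}),\ldots,\delta(x_{n})]$, i.e. the centroid identity in the first slot. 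Running the identical computation with an arbitrary slot $k$ singled out gives the centroid identity in every slot, so $[D,D']\in{\rm C}_{(\alpha^{s+s'},\beta^{r+r'})}(\mathfrak{g})$, and summing over degrees proves $(1)$.

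The main obstacle is organisational rather than conceptual: keeping the twisting maps in each slot correctly aligned so that the centroid identity applies in the backward direction (one must check that the non-distinguished slots really carry $\gamma'$ of something, so that $D'$ can be pulled back in), and confirming that the single-slot centroid identity suffices by re-running the computation for each $k$ rather than appealing to a permutation symmetry of untwisted brackets, which is not available here. I would also make explicit at the outset that in $(2)$ the operation is composition of endomorphisms, since the inclusion is false if read as a sum of subspaces.
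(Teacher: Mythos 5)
Your proof is correct and follows essentially the same route as the paper's: part (2) applies $D'$ slot-by-slot to the derivation expansion of $D$ via the centroid identity, and part (1) combines the derivation property of $D$ with forward and backward uses of the centroid identity for $D'$, subtracting to isolate the commutator, then repeats the computation for each slot $k$. Your reading of the inclusion in (2) as composition $D'\circ D\in{\rm Der}(\mathfrak{g})$ is also exactly what the paper's proof establishes, despite the $\oplus$ in the statement.
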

\begin{proof} Let $ D \in Der_{(\alpha^{s},\beta^{r})}(\mathfrak{g}) $ and $ D'\in C_{(\alpha^{s'},\beta^{r'})}(\mathfrak{g}) $ for some $ s,s',r,r'. $ Let $ x_{1},\ldots,x_{n} \in \mathfrak{g}. $
	
	$ (1) $ \begin{align*}
	& [DD'(x_{1}),\alpha^{s+s'}\beta^{r+r'}(x_{2}),\ldots,\alpha^{s+s'}\beta^{r+r'}(x_{n})]\\ &= D([D'(x_{1}),\alpha^{s'}\beta^{r'}(x_{2}),\ldots,\alpha^{s'}\beta^{r'}(x_{n})])- \sum_{i=2}^{n}[\alpha^{s}\beta^{r}D'(x_{1}),\ldots,D(x_{i}),\ldots,\alpha^{s}\beta^{r}(x_{n})] \\
	&= DD'([x_{1},\ldots,x_{n}]) - \sum_{i=2}^{n}[\alpha^{s}\beta^{r}(x_{1}),\ldots,D'D(x_{i}),\ldots,\alpha^{s}\beta^{r}(x_{n})].
	\end{align*}
	On the other hand,
	\begin{align*}
	& [D'D(x_{1}),\alpha^{s+s'}\beta^{r+r'}(x_{2}),\ldots,\alpha^{s+s'}\beta^{r+r'}(x_{n})]= D'([D(x_{1}),\alpha^{s}\beta^{r}(x_{2}),\ldots,\alpha^{s}\beta^{r}(x_{n})]) \\
	& =DD'([x_{1},\ldots,x_{n}]) -D'(\sum_{i=2}^{n}[\alpha^{s}\beta^{r}(x_{1}),\ldots,D(x_{i}),\ldots,\alpha^{s}\beta^{r}(x_{n})])
	\end{align*}
	but since for each $ i $,
	
	$ D'([\alpha^{s}\beta^{r}(x_{1}),\ldots,D(x_{i}),\ldots,\alpha^{s}\beta^{r}(x_{n})]) = [\alpha^{s}\beta^{r}(x_{1}),\ldots,D'D(x_{i}),\ldots,\alpha^{s}\beta^{r}(x_{n})]$, 	
	so 
	\begin{eqnarray*} D'(\sum_{i=2}^{n}[\alpha^{s}\beta^{r}(x_{1}),\ldots,D(x_{i}),\ldots,\alpha^{s}\beta^{r}(x_{n})])=\\ \sum_{i=2}^{n}[\alpha^{s}\beta^{r}(x_{1}),\ldots,D'D(x_{i}),\ldots,\alpha^{s}\beta^{r}(x_{n})]. 
	\end{eqnarray*}
	\\
	Hence $ [[D,D'](x_{1}),\alpha^{s+s'}\beta^{r+r'}(x_{2}),\ldots,\alpha^{s+s'}\beta^{r+r'}(x_{n})] = [D,D']([x_{1},\ldots,x_{n}]).$
	
	The same proof holds for any $ i \in \{1,\ldots,n\} $. Thus $ [D,D'] \in  C_{(\alpha^{s+s'},\beta^{r+r'})}(\mathfrak{g})$.
	
	$ (2) $ Now 
	\begin{align*}
	& D'D ([x_{1},\ldots,x_{n}])=D'([D(x_{1}),\alpha^{s}\beta^{r}(x_{2})\ldots,\alpha^{s}\beta^{r}(x_{n})]) 
	\\ & + D'(\sum_{i=2}^{n}[\alpha^{s}\beta^{r}(x_{1}),\ldots,D(x_{i}),\ldots,\alpha^{s}\beta^{r}(x_{n})])\\
	& =  [D'D(x_{1}),\alpha^{s+s'}\beta^{r+r'}(x_{2}),\ldots,\alpha^{s+s'}\beta^{r+r'}(x_{n})] 
	\\ &
	+ \sum_{i=2}^{n}[\alpha^{s+s'}\beta^{r+r'}(x_{1}),\ldots,D'D(x_{i}),\ldots,\alpha^{s+s'}\beta^{r+r'}(x_{n})].
	\end{align*}
	Then $ D'D \in Der_{(\alpha^{s+s'},\beta^{r+r'})}(\mathfrak{g}). $
\end{proof}

In the following lemma, we provide some properties and relations of the subspaces of $ Der(\mathfrak{g}) $ involving in particular the subalgebra of quasiderivations $ QDer(\mathfrak{g}) $.
\begin{lemma} Let $ (\mathfrak{g}, [\cdot,\ldots,\cdot],\alpha,\beta]) $ be a multiplicative $ n $-BiHom-Lie algebra.
	
	$(1)$ \quad $[{\rm QDer}(\mathfrak{g}),{\rm QC}(\mathfrak{g})]\subseteq {\rm QC}(\mathfrak{g});$
	
	$(2)$ \quad ${\rm C}(\mathfrak{g})\subseteq {\rm QDer}(\mathfrak{g});$
	
	$(3)$ \quad $[{\rm QC}(\mathfrak{g}),{\rm QC}(\mathfrak{g})]\subseteq {\rm QDer}(\mathfrak{g});$
	
	$(4)$ \quad ${\rm QDer}(\mathfrak{g})+{\rm QC}(\mathfrak{g})\subseteq {\rm GDer}(\mathfrak{g}).$
	
	
\end{lemma}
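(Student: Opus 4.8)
The plan is to prove each of the four containments by direct computation, unwinding the defining identities for $\mathrm{QDer}$, $\mathrm{QC}$, $\mathrm{C}$, and $\mathrm{GDer}$ and checking that composites or commutators land in the claimed space. Throughout I would fix $D\in\mathrm{QDer}_{(\alpha^{s},\beta^{r})}(\mathfrak{g})$ with associated $D'$ (so that $D'$ records the ``sum over slots'' action of $D$), and $E\in\mathrm{QC}_{(\alpha^{s'},\beta^{r'})}(\mathfrak{g})$, and similarly for the centroid elements, each with its own exponent pair. The crucial bookkeeping fact I would use repeatedly is that $\alpha,\beta$ commute with every derivation/centroid map in sight, so that $\alpha^{s'}\beta^{r'}$ can be slid past $D$ and the exponents simply add to $\alpha^{s+s'}\beta^{r+r'}$ when composing; this is exactly the mechanism already exploited in Lemma~\ref{lem1}.

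For part $(2)$, $\mathrm{C}(\mathfrak{g})\subseteq\mathrm{QDer}(\mathfrak{g})$, the argument is essentially immediate: if $D\in\mathrm{C}_{(\alpha^{s},\beta^{r})}(\mathfrak{g})$ then $D$ applied to a bracket equals the bracket with $D$ in any single slot, so summing that identity over the $n$ slots shows $D$ is a quasiderivation whose companion map $D'$ is simply $nD$. Part $(4)$ is the easiest structurally, since one only needs to produce, for $D\in\mathrm{QDer}$ and $E\in\mathrm{QC}$, the family $D^{(i)}$ witnessing that $D+E$ is a generalized derivation; here I would take the quasiderivation's companion together with the quasicentroid identity and assemble the required $n$ endomorphisms, verifying the single long generalized-derivation identity by adding the quasiderivation relation to the slot-wise quasicentroid relations.

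Parts $(1)$ and $(3)$ are where the real work lies. For $(1)$ I would compute $[D,E]=DE-ED$ evaluated on a bracket with $[D,E](x_1)$ placed in the first slot, and show it equals the same expression with $[D,E](x_i)$ in the $i$-th slot. The strategy is to expand $D\bigl([E(x_1),\alpha^{s'}\beta^{r'}(x_2),\dots]\bigr)$ using the quasiderivation relation for $D$ (its companion $D'$), then use the quasicentroid relation for $E$ to move $E$ into a chosen slot, and finally observe that the ``extra'' sum terms produced by $DE$ and by $ED$ cancel pairwise because $E$ commutes with $\alpha,\beta$ and satisfies its slot-transfer identity. Part $(3)$ is analogous but symmetric in two quasicentroid elements $E,F$: evaluating $[E,F]$ on a bracket and repeatedly applying the quasicentroid identity of each to shuffle $E$ and $F$ into the various slots produces, after the antisymmetrization inherent in the commutator, a sum over all $n$ slots, which is precisely the quasiderivation identity with companion map related to $EF+FE$ or a scalar multiple thereof.

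The main obstacle I anticipate is the index and sign bookkeeping in $(1)$ and $(3)$: the quasicentroid identity only asserts equality of the bracket with the map in slot $1$ and the bracket with the map in slot $i$, so to generate the full symmetric sum one must invoke it $n-1$ times and carefully track which terms survive after the two orders of composition are subtracted. The potential subtlety is that this paper's quasicentroid is defined by an \emph{equality} of slotwise brackets rather than by a single summed identity, so I must be attentive that applying $E$ to a sum of bracket terms (as arises inside $DE$) distributes correctly and that the slot-transfer can legitimately be applied term by term; once the commutator subtraction is performed the diagonal terms cancel and the off-diagonal ones reorganize into the quasiderivation shape, but making this reorganization precise for general $n$ is the part that requires the most care.
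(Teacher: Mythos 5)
Parts (2) and (4) of your plan are correct and essentially what the paper does (it settles (2) by noting the companion map is $nD$, and dismisses (4) as obvious). The genuine gap is in (3), and it comes from one conceptual error that also muddies (1): in this paper a quasicentroid element $E$ has \emph{no prescribed action on brackets} --- its defining identity is purely a slot-transfer statement $[E(x_1),\alpha^{s}\beta^{r}(x_2),\ldots,\alpha^{s}\beta^{r}(x_n)]=[\alpha^{s}\beta^{r}(x_1),\ldots,E(x_i),\ldots,\alpha^{s}\beta^{r}(x_n)]$ --- and a quasiderivation $D$ acts on brackets only through its companion $D'$, never directly. So your opening move in (3), ``evaluating $[E,F]$ on a bracket and repeatedly applying the quasicentroid identity,'' cannot be performed: $EF([x_1,\ldots,x_n])$ and $FE([x_1,\ldots,x_n])$ admit no expansion at all, and your predicted outcome --- the quasiderivation identity with companion ``related to $EF+FE$'' --- is both unprovable (nothing constrains $EF+FE$ on $[\mathfrak{g},\ldots,\mathfrak{g}]$) and not what is true. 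The paper's proof of (3) is simpler and never applies $E$ or $F$ to a bracket: using only slot transfer for $E$ and for $F$ (together with their commutation with $\alpha,\beta$), one moves $E$ from slot $1$ to slot $2$, then $F$ from slot $1$ to slot $3$, then moves each back in the opposite order, obtaining $[EF(x_1),\alpha^{s+s'}\beta^{r+r'}(x_2),\ldots]=[FE(x_1),\alpha^{s+s'}\beta^{r+r'}(x_2),\ldots]$, and likewise with $[E,F]$ in any slot. Hence every single insertion of $[E,F]$ vanishes, the sum over slots is $0$, and $[E,F]$ is a quasiderivation with companion map $0$; no antisymmetrized ``sum over all $n$ slots'' ever materializes.

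Part (1) as written suffers from the same type error: you cannot ``expand $D([E(x_1),\ldots])$ using the quasiderivation relation,'' since that relation expands $D'$, not $D$, on brackets; and your anticipated step of ``applying $E$ to a sum of bracket terms'' is a centroid-style manipulation, legitimate in Lemma~\ref{lem1} where the second map lies in ${\rm C}(\mathfrak{g})$, but unavailable for ${\rm QC}(\mathfrak{g})$. The repair keeps your skeleton but changes the mechanism: instantiate the companion identity of $D$ at the two argument tuples $(E(x_1),\alpha^{s'}\beta^{r'}(x_2),\ldots,\alpha^{s'}\beta^{r'}(x_n))$ and $(\alpha^{s'}\beta^{r'}(x_1),\ldots,E(x_j),\ldots,\alpha^{s'}\beta^{r'}(x_n))$; the two resulting values of $D'$ coincide by the quasicentroid identity of $E$; each cross term carrying $D$ in slot $i$ and $E$ in another slot converts, by slot transfer of $E$, into $[\alpha^{s+s'}\beta^{r+r'}(x_1),\ldots,ED(x_i),\ldots,\alpha^{s+s'}\beta^{r+r'}(x_n)]$; subtracting the two instances then yields exactly $[[D,E](x_1),\alpha^{s+s'}\beta^{r+r'}(x_2),\ldots]=[\alpha^{s+s'}\beta^{r+r'}(x_1),\ldots,[D,E](x_j),\ldots]$, i.e.\ $[D,E]\in {\rm QC}(\mathfrak{g})$. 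With these repairs your outline goes through; as written, (3) fails at its first step and (1) rests on identities that quasiderivations and quasicentroid elements do not satisfy.
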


\begin{proof}
	$ (1) $ This inclusion is similar to $ (1) $ of Lemma \ref{lem1}.
	
	$ (2) $ It is an immediate consequence of the definition of a quasiderivation. If $ D \in  C_{(\alpha^{s},\beta^{r})}(\mathfrak{g}) $, then 
	$ \sum_{i=1}^{n}[\alpha^{s}\beta^{r}(x_{1}),\ldots,D(x_{i}),\ldots,\alpha^{s}\beta^{r}(x_{n})]=nD([x_{1},\ldots,x_{n}]) .$
	
	$ (3) $ Let $ D \in QC_{(\alpha^{s},\beta^{r})}(\mathfrak{g}) $ and $ D' \in QC_{(\alpha^{s'},\beta^{r'})}(\mathfrak{g}) $. For any $ x_{1},\ldots,x_{n} \in \mathfrak{g} $ we have 
	\begin{align*}
	& [DD'(x_{1}),\alpha^{s+s'}\beta^{r+r'}(x_{2}),\ldots,\alpha^{s+s'}\beta^{r+r'}(x_{n})]\\ &= [\alpha^{s}\beta^{r}D'(x_{1}),D\alpha^{s'}\beta^{r'}(x_{2}),\ldots,\alpha^{s+s'}\beta^{r+r'}(x_{n})] \\
	& = [\alpha^{s+s'}\beta^{r+r'}(x_{1}),D\alpha^{s'}\beta^{r'}(x_{2}),D'\alpha^{s}\beta^{r}(x_{3}),\ldots,\alpha^{s+s'}\beta^{r+r'}(x_{n})] \\ &=[D\alpha^{s'}\beta^{r'}(x_{1}),\alpha^{s+s'}\beta^{r+r'}(x_{2}),D'\alpha^{s}\beta^{r}(x_{3}),\ldots,\alpha^{s+s'}\beta^{r+r'}(x_{n})] \\
	& =  [D'D(x_{1}),\alpha^{s+s'}\beta^{r+r'}(x_{2}),\alpha^{s+s'}\beta^{r+r'}(x_{3}),\ldots,\alpha^{s+s'}\beta^{r+r'}(x_{n})].
	\end{align*}
	Then $ [[D,D'](x_{1}),\alpha^{s+s'}\beta^{r+r'}(x_{2}),\ldots,\alpha^{s+s'}\beta^{r+r'}(x_{n})]=0 $.
	
	In the same way we have $ [\alpha^{s+s'}\beta^{r+r'}(x_{1}),\ldots,[D,D'](x_{i}),\ldots,\alpha^{s+s'}\beta^{r+r'}(x_{n})] =0$ for all $ i $. Hence $ \sum_{i=1}^{n}[\alpha^{s+s'}\beta^{r+r'}(x_{1}),\ldots,[D,D'](x_{i}),\ldots,\alpha^{s+s'}\beta^{r+r'}(x_{n})]=0. $ And so $ [D,D'] \in QDer_{(\alpha^{s+s'},\beta^{r+r'})}(\mathfrak{g}) . $ 
	
	$ (4)$ Obviously.  
\end{proof}

\begin{proposition}
	If $ (\mathfrak{g},[\cdot,\ldots,\cdot],\alpha,\beta) $ is an $ n $-BiHom-Lie algebra with trivial center, then we have the following
	
	\[{\rm Der(\mathfrak{g})} \oplus {\rm C(\mathfrak{g})} \subseteq {\rm QDer(\mathfrak{g})}.\]
\end{proposition}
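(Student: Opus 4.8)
The plan is to establish the statement in two parts: that the sum $ {\rm Der}(\mathfrak{g}) + {\rm C}(\mathfrak{g}) $ lands inside $ {\rm QDer}(\mathfrak{g}) $, and that this sum is direct. The inclusion is essentially free from what precedes. The Remark already records $ {\rm Der}(\mathfrak{g}) \subseteq {\rm QDer}(\mathfrak{g}) $, while part $(2)$ of the previous Lemma gives $ {\rm C}(\mathfrak{g}) \subseteq {\rm QDer}(\mathfrak{g}) $. Since $ {\rm QDer}(\mathfrak{g}) $ is a linear subspace of ${\rm End}(\mathfrak{g})$ — if $D_1,D_2$ are $(\alpha^s,\beta^r)$-quasiderivations with witnesses $D_1',D_2'$, then $\lambda D_1+\mu D_2$ is one with witness $\lambda D_1'+\mu D_2'$ — the span of the two subspaces is again contained in it. So the substance of the proposition is the directness.

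For directness I would prove $ {\rm Der}(\mathfrak{g}) \cap {\rm C}(\mathfrak{g}) = \{0\} $, and it is enough to argue on each bidegree $(s,r)$. Fix such a degree and take $ D \in {\rm Der}_{(\alpha^s,\beta^r)}(\mathfrak{g}) \cap {\rm C}_{(\alpha^s,\beta^r)}(\mathfrak{g}) $. By the earlier Proposition this intersection is exactly $ {\rm ZDer}_{(\alpha^s,\beta^r)}(\mathfrak{g}) $; recall the short argument behind it: comparing the derivation identity $D([x_1,\ldots,x_n]) = \sum_{i}[\alpha^s\beta^r(x_1),\ldots,D(x_i),\ldots,\alpha^s\beta^r(x_n)]$ with the centroid identity (each summand equals $D([x_1,\ldots,x_n])$) forces $D([x_1,\ldots,x_n]) = n\,D([x_1,\ldots,x_n])$, hence $D([x_1,\ldots,x_n]) = 0$ and consequently $[\alpha^s\beta^r(x_1),\ldots,D(x_i),\ldots,\alpha^s\beta^r(x_n)] = 0$ for every $i$.

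It then remains to deduce $D = 0$ from the triviality of the center. Setting $i=1$ in the last relation yields $[D(x_1),\alpha^s\beta^r(x_2),\ldots,\alpha^s\beta^r(x_n)] = 0$ for all $x_2,\ldots,x_n \in \mathfrak{g}$. This is the step where I would invoke that $\alpha$ and $\beta$ are surjective (the regular hypothesis): then $\alpha^s\beta^r$ is onto, so as the $x_j$ range over $\mathfrak{g}$ the elements $\alpha^s\beta^r(x_j)$ exhaust $\mathfrak{g}$, giving $[D(x_1),z_2,\ldots,z_n] = 0$ for arbitrary $z_2,\ldots,z_n$. By definition this means $D(x_1) \in Z(\mathfrak{g})$, and since $x_1$ is arbitrary, $D(\mathfrak{g}) \subseteq Z(\mathfrak{g}) = \{0\}$, so $D = 0$. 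Thus ${\rm ZDer}(\mathfrak{g}) = \{0\}$, the intersection is trivial, and the sum is direct.

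The main obstacle I expect is precisely this final passage: the central-derivation relation only gives vanishing of brackets against $\alpha^s\beta^r$-images, whereas membership in the untwisted center $Z(\mathfrak{g})$ asks for vanishing against all elements, so surjectivity of the twisting maps is genuinely needed (and should be stated as a hypothesis if it is not already implicit in ``trivial center''). A minor accompanying point is the reduction to homogeneous bidegrees, which is legitimate because the defining conditions of both ${\rm Der}$ and ${\rm C}$ are compatible with the bigrading by powers of $\alpha$ and $\beta$.
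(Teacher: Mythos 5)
Your proof is correct and takes essentially the same route as the paper's: the inclusion follows from the preceding remark and lemma, and directness reduces to showing ${\rm Der}(\mathfrak{g}) \cap {\rm C}(\mathfrak{g}) = {\rm ZDer}(\mathfrak{g}) = \{0\}$ using the triviality of the center. You are in fact more careful than the paper, whose proof silently drops the twisting maps and writes $[D(u),x_{1},\ldots,x_{n-1}]=0$ outright: the surjectivity (regularity) hypothesis you flag is genuinely needed to pass from $[D(u),\alpha^{s}\beta^{r}(x_{1}),\ldots,\alpha^{s}\beta^{r}(x_{n-1})]=0$ to $D(u)\in Z(\mathfrak{g})$, so your version repairs a real sloppiness in the published argument.
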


\begin{proof}
	Both $ {\rm Der(\mathfrak{g})}  $ and $  {\rm C(\mathfrak{g})}  $ are subspaces of $ {\rm QDer(\mathfrak{g})} $. Moreover, if $ D \in {\rm Der(\mathfrak{g})} \cap  {\rm C(\mathfrak{g})} $ then for $ u \in \mathfrak{g} $ we have 
	$ \sum_{i=1}^{n}[D(u),x_{1},\ldots,x_{n-1}]= [D(u),x_{1},\ldots,x_{n-1}] =0$, for all $ x_{1},\ldots,x_{n-1} \in \mathfrak{g} $. Therefore $ D(u) \in Z(\mathfrak{g}) $, hence $ D=0. $
\end{proof}

\section{Quasiderivations of $ n $-BiHom-Lie Algebras}\label{Quasi-Derivations}

\begin{proposition} 
	Let $ (\mathfrak{g},[\cdot,\ldots,\cdot],\alpha,\beta) $ be an $ n $-BiHom-Lie algebra over
	${\mathbb K}$ and $t$ be an indeterminate. 
	Define 
	$\breve{\mathfrak{g}}= \{\Sigma(x\otimes t+y\otimes t^{n}) \ | \ x,y\in \mathfrak{g}\} ,\ \breve{\alpha}(\breve{\mathfrak{g}})= \{\Sigma ( \alpha(x) \otimes t + \alpha (y) \otimes t^n) \ | \  x,y\in \mathfrak{g}\},$
	and $ \ \breve{\beta}(\breve{\mathfrak{g}})= \{\Sigma ( \beta(x) \otimes t + \beta (y) \otimes t^n) \ | \  x,y\in \mathfrak{g}\}$. 
	Then 
	$ (\breve{\mathfrak{g}},[\cdot,\ldots,\cdot],\breve{\alpha},\breve{\beta}) $ is a multiplicative $ n $-BiHom-Lie algebra where the bracket is given by 
	$$[x_1\otimes t^{i_1},x_2 \otimes
	t^{i_2}, \ldots ,x_n\otimes t^{i_n}]=[x_1,x_2, \ldots, x_n]\otimes t^{\sum i_j},$$
	for $ i_1, \ldots, i_n \in \{1,n\}$ . If $k > n$, we let $t^k = 0$.
	
\end{proposition}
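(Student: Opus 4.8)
The plan is to exploit the $\mathbb{Z}$-grading of $\breve{\mathfrak g}$ by powers of $t$, together with the truncation rule $t^k=0$ for $k>n$, which forces almost every bracket to vanish. I write a homogeneous element of $\breve{\mathfrak g}$ as $x\otimes t^i$ with $i\in\{1,n\}$, and record at once that $\breve\alpha$ and $\breve\beta$ preserve the degree $i$ and act as $\alpha$, resp.\ $\beta$, on the $\mathfrak g$-factor. Condition $(1)$, namely $\breve\alpha\circ\breve\beta=\breve\beta\circ\breve\alpha$, is then immediate from $\alpha\circ\beta=\beta\circ\alpha$ applied degreewise.

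The key structural observation, which I would isolate as a short lemma, is that a bracket $[x_1\otimes t^{i_1},\ldots,x_n\otimes t^{i_n}]$ is nonzero only when $i_1=\cdots=i_n=1$. Indeed its output lies in degree $\sum_j i_j$, and as soon as one $i_j=n$ we get $\sum_j i_j\ge n+(n-1)=2n-1>n$ for $n\ge 2$, so $t^{\sum_j i_j}=0$. Hence the only surviving brackets send $n$ elements of degree $1$ to the single element $[x_1,\ldots,x_n]\otimes t^n$ of degree $n$ (note $\sum_j i_j=n\le n$ survives), whereas any bracket that already contains a degree-$n$ entry vanishes.

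With this in hand the two remaining axioms reduce to almost nothing. For the skew-symmetry condition $(2)$ I reduce to homogeneous inputs by multilinearity: if all inputs have degree $1$, both sides are the corresponding instances of condition $(2)$ for $\mathfrak g$, tensored with $t^n$; if some input has degree $n$ both sides vanish by the lemma, since a permutation only relocates the degree-$n$ entry and cannot remove it. For the BiHom-Jacobi identity $(3)$ I observe that every term on both sides is an iterated bracket, i.e.\ a bracket one of whose entries is itself a bracket. When that inner bracket is nonzero it has degree $n$, so the outer bracket contains a degree-$n$ entry and vanishes by the lemma; when the inner bracket is zero the term vanishes anyway. Thus both sides are identically zero and $(3)$ holds trivially.

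Finally, for multiplicativity: since $\breve\alpha$ (resp.\ $\breve\beta$) preserves degrees and commutes with the truncation, checking $\breve\alpha([z_1,\ldots,z_n])=[\breve\alpha(z_1),\ldots,\breve\alpha(z_n)]$ reduces, via the lemma, to the single nonzero case of $n$ inputs of degree $1$, where it becomes $\alpha([x_1,\ldots,x_n])\otimes t^n=[\alpha(x_1),\ldots,\alpha(x_n)]\otimes t^n$; this is precisely multiplicativity of $\alpha$ on $\mathfrak g$, and likewise for $\beta$. I expect no real obstacle: the entire content is the degree count of the lemma, and the only point demanding care is the bookkeeping that reduces the genuinely nonhomogeneous elements $\Sigma(x\otimes t+y\otimes t^n)$ to homogeneous ones before the lemma is applied. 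The hypothesis that $\alpha,\beta$ are morphisms of $\mathfrak g$ is exactly what upgrades the conclusion to \emph{multiplicative}, while axioms $(1)$--$(3)$ require only that $\mathfrak g$ is an $n$-BiHom-Lie algebra.
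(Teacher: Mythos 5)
Your proof is correct, and its skeleton matches the paper's: check $\breve\alpha\circ\breve\beta=\breve\beta\circ\breve\alpha$ degreewise, then verify multiplicativity, skew-symmetry and the BiHom-Jacobi identity on homogeneous elements $x\otimes t^i$. The one genuine divergence is in the BiHom-Jacobi identity. The paper verifies it by a formal computation: it writes the left-hand side as $[\beta^{2}(x_{1}),\ldots,\beta^{2}(x_{n-1}),[\beta(y_{1}),\ldots,\alpha(y_{n})]]\otimes t^{\sum i_j+\sum i'_j}$, applies the Jacobi identity of $\mathfrak{g}$ to the first tensor factor, and reassembles the right-hand side, simply carrying the powers of $t$ along. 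You instead note that any nonzero inner bracket lands in degree $n$, so every outer bracket on either side has degree at least $2n-1>n$ and is killed by the truncation; hence both sides vanish identically and the axiom holds vacuously. Your route is more economical and exposes something the paper's computation obscures: condition $(3)$ for $\breve{\mathfrak{g}}$ does not actually use condition $(3)$ for $\mathfrak{g}$, only the truncation rule (the same vacuity is implicitly present in the paper's chain of equalities, since there too $t^{\sum i_j+\sum i'_j}=0$, but it is never remarked). Relatedly, your key lemma --- a bracket containing a degree-$n$ entry vanishes --- appears in the paper only as an aside inside the verification of skew-symmetry (``one may assume $i_1=\cdots=i_n=1$''), whereas you promote it to the organizing principle of the whole argument; the paper's uniform formal computations avoid your case analysis, while your version isolates more clearly why the construction works.
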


\begin{proof}
	For any $ x,x_{1},\ldots, x_{n}\in \mathfrak{g} $ and $i, i_1, \ldots, i_n \in \{1,n\}$, we have \begin{align*}
		\breve{\alpha}\circ \breve{\beta}(x\otimes t^{i}) &= \breve{\alpha}(\beta(x)\otimes t^{i})\\
		& = \alpha \circ \beta (x) \otimes t^{i} \\
		&= \beta \circ \alpha (x) \otimes t^{i} = \breve{\beta}\circ \breve{\alpha} (x\otimes t^{i})
	\end{align*}
	then $ \breve{\alpha}\circ \breve{\beta} =  \breve{\beta}\circ \breve{\alpha}$. Also,
	\begin{align*}
		\breve{\alpha}([x_1\otimes t^{i_1},x_2 \otimes
		t^{i_2}, \ldots ,x_n\otimes t^{i_n}]) &= \breve{\alpha} ([x_1,x_2, \ldots, x_n]\otimes t^{\sum i_j}) \\
		& = [\alpha (x_1), \alpha (x_2), \ldots,\alpha  (x_n)]\otimes t^{\sum i_j} \\
		& = [\breve{\alpha} (x_1\otimes t^{i_1} ), \breve{\alpha} (x_2 \otimes t^{i_2}), \ldots,\breve{\alpha}  (x_n \otimes t^{i_n})].
	\end{align*}
	Same argument holds for $ \breve{\beta} $.
	\begin{align*}
		&[\breve{\beta}  (x_{1} \otimes t^{i_{1}}),\ldots,\breve{\beta} (x_{n-1} \otimes t^{i_{n-1}}), \breve{\alpha}(x_{n} \otimes t^{i_{n}})]=[\beta(x_{1}),\ldots, \beta(x_{n-1}),\alpha(x_{n})] \otimes t^{\sum i_j} \\
		&= Sgn(\sigma)[\beta(x_{\sigma(1)}),\ldots, \beta(x_{\sigma(n-1)}),\alpha(x_{\sigma(n)})] \otimes t^{ \sum i_j } \\
		&= Sgn(\sigma)	[\breve{\beta}  (x_{\sigma(1)} \otimes t^{i_{1}},\ldots,\breve{\beta} (x_{\sigma (n-1)} \otimes t^{i_{n-1}}), \breve{\alpha}(x_{\sigma (n)} \otimes t^{i_{n}})]
	\end{align*}
	for any $ \sigma \in S_{n}. $ Note that if $ i_{j}=n $ for some $ j $, then the bracket would be zero since in that case the sum $ \sum i_j  \mathfrak{g}eqslant n+1$ therefore $ t^{ \sum i_j } =0 $. So one may assume that $ i_1= \ldots= i_n=1 $. Finally,
	\begin{align*}
		& [\breve{\beta}^{2}  (x_{1} \otimes t^{i_{1}}),\ldots,\breve{\beta}^{2} (x_{n-1} \otimes t^{i_{n-1}}), [\breve{\beta}  (y_{1} \otimes t^{i'_{1}}),\ldots,\breve{\beta} (y_{n-1} \otimes t^{i'_{n-1}}),\breve{\alpha}(y_{n} \otimes t^{i'_{n}})]]  \\
		& =[\beta^{2}  (x_{1}) \otimes t^{i_{1}},\ldots,\beta^{2} (x_{n-1}) \otimes t^{i_{n-1}},[\beta(y_{1}),\ldots, \beta(y_{n-1}),\alpha(y_{n})] \otimes t^{\sum i'_j} ]\\
		&= [\beta^{2}  (x_{1}),\ldots,\beta^{2} (x_{n-1}) ,[\beta(y_{1}),\ldots, \beta(y_{n-1}),\alpha(y_{n})] ] \otimes t ^{\sum i_j + \sum i'_j}  \\
		&= \sum_{k=1}^{n}  (-1)^{n-k} [\beta^{2}(y_{1}),\ldots,\widehat{\beta^{2}(y_{k})},\ldots,\beta^{2}(y_{n}),[\beta(x_{1}),\ldots, \beta(x_{n-1}),\alpha(y_{k})]] \otimes t ^{\sum i_j + \sum i'_j}\\
		&= \sum_{k=1}^{n}  (-1)^{n-k} [\breve{\beta}^{2}(y_{1}\otimes t^{i'_{1}}),\ldots,\widehat{\breve{\beta}^{2}(y_{k}\otimes t^{i'_{k}})},\ldots,\breve{\beta}^{2}(y_{n}\otimes t^{i'_{n}}),[\breve{\beta}(x_{1}\otimes t^{i_{1}}),\ldots \\
		& \ldots, \breve{\beta}(x_{n-1}\otimes t^{i_{n-1}}),\breve{\alpha}(y_{k}\otimes t^{i'_{k}})]].
	\end{align*}
	Thus, $ (\breve{\mathfrak{g}},[\cdot,\ldots,\cdot],\breve{\alpha},\breve{\beta}) $ is a multiplicative $ n $-BiHom-Lie algebra.
\end{proof}

For the sake of convenience, we will write $xt ~ (xt^{n})$ instead of $x\otimes t ~ (x\otimes t^{n}).$
 If $U$ is a  subspace of $\mathfrak{g}$ such that $\mathfrak{g}=U\oplus [\mathfrak{g},\ldots,\mathfrak{g}],$ then 
$$\breve{\mathfrak{g}}=\mathfrak{g} t+\mathfrak{g} t^{n}=\mathfrak{g} t+Ut^{n}+[\mathfrak{g},\ldots,\mathfrak{g}]t^{n}.$$

Let a map $ \varphi :{\rm QDer}(\mathfrak{g})\rightarrow {\rm End}(\breve{\mathfrak{g}}) $ be defined by 
$$\varphi(D)(at+ut^{n}+bt^{n})=D(a)t+D'(b)t^{n},$$
where $D\in {\rm QDer}(\mathfrak{g}),$  $D'$ is a map related to $D$ by the definition of quasiderivation, $a\in\mathfrak{g},u\in U,b\in [\mathfrak{g},\ldots,\mathfrak{g}]$. 

\begin{proposition}
	Let $\mathfrak{g},\breve{\mathfrak{g}},\varphi$ be as above. Then
	
	$(1)$ $\varphi$ is injective and $\varphi(D)$ does not depend on the choice of $D'$;
	
	$(2)$ $\varphi({\rm QDer}(\mathfrak{g}))\subseteq {\rm Der}(\breve{\mathfrak{g}}).$
\end{proposition}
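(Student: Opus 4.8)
The plan is to treat the two assertions separately, and throughout to exploit the crucial degree observation already used in the construction of $\breve{\mathfrak{g}}$: in $\breve{\mathfrak{g}}$ every bracket having at least one $t^{n}$-factor vanishes, because if some $i_{j}=n$ then $\sum i_{j}\geq 2n-1>n$ and $t^{\sum i_j}=0$. Hence for elements $X_{k}=a_{k}t+c_{k}t^{n}$ one has $[X_{1},\ldots,X_{n}]=[a_{1},\ldots,a_{n}]\,t^{n}\in[\mathfrak{g},\ldots,\mathfrak{g}]\,t^{n}$, so the bracket depends only on the $t$-coefficients and its image lands exactly in the subspace on which $\varphi(D)$ acts through $D'$. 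This single fact is what forces $\varphi$ to land in ${\rm Der}(\breve{\mathfrak{g}})$.

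For $(1)$ I would first settle independence of the choice of $D'$. If $D'_{1},D'_{2}$ are both partners of a fixed $D$ in the sense of the quasiderivation definition, then for every bracket $[x_{1},\ldots,x_{n}]$ both equal $\sum_{i=1}^{n}[\alpha^{s}\beta^{r}(x_{1}),\ldots,D(x_{i}),\ldots,\alpha^{s}\beta^{r}(x_{n})]$; since such brackets span $[\mathfrak{g},\ldots,\mathfrak{g}]$, we get $D'_{1}=D'_{2}$ on $[\mathfrak{g},\ldots,\mathfrak{g}]$. As $\varphi(D)$ only ever evaluates $D'$ on the component $b\in[\mathfrak{g},\ldots,\mathfrak{g}]$, the value $\varphi(D)(at+ut^{n}+bt^{n})=D(a)t+D'(b)t^{n}$ is unambiguous. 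Injectivity is then immediate: if $\varphi(D)=0$, reading off the coefficient of $t$ gives $D(a)=0$ for all $a\in\mathfrak{g}$, whence $D=0$; linearity of $\varphi$ finishes the claim.

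For $(2)$, fix $D\in{\rm QDer}_{(\alpha^{s},\beta^{r})}(\mathfrak{g})$ with partner $D'$ and aim to show $\varphi(D)\in{\rm Der}_{(\breve{\alpha}^{s},\breve{\beta}^{r})}(\breve{\mathfrak{g}})$. The Leibniz identity is the heart of the argument and drops out of the degree observation. With $X_{k}=a_{k}t+c_{k}t^{n}$, the left-hand side is $\varphi(D)[X_{1},\ldots,X_{n}]=\varphi(D)\big([a_{1},\ldots,a_{n}]t^{n}\big)=D'([a_{1},\ldots,a_{n}])\,t^{n}$, and expanding $D'$ by its defining identity rewrites this as $\sum_{i}[\alpha^{s}\beta^{r}(a_{1}),\ldots,D(a_{i}),\ldots,\alpha^{s}\beta^{r}(a_{n})]\,t^{n}$. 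On the right-hand side, each summand $[\breve{\alpha}^{s}\breve{\beta}^{r}(X_{1}),\ldots,\varphi(D)(X_{i}),\ldots,\breve{\alpha}^{s}\breve{\beta}^{r}(X_{n})]$ again only sees the $t$-coefficients $\alpha^{s}\beta^{r}(a_{j})$ and $D(a_{i})$, so it equals $[\alpha^{s}\beta^{r}(a_{1}),\ldots,D(a_{i}),\ldots,\alpha^{s}\beta^{r}(a_{n})]\,t^{n}$; the two sides then match term by term.

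It remains to check that $\varphi(D)$ commutes with $\breve{\alpha}$ and $\breve{\beta}$, and this is the step I expect to be the main obstacle. On $\mathfrak{g}t$ the map $\varphi(D)$ acts as $D$ and $\breve{\alpha}$ as $\alpha$, so commuting reduces to $D\alpha=\alpha D$; on $[\mathfrak{g},\ldots,\mathfrak{g}]t^{n}$ it reduces to $D'\alpha=\alpha D'$, both of which are quasiderivation axioms. The delicate point is that $\varphi(D)$ is defined through the splitting $\mathfrak{g}=U\oplus[\mathfrak{g},\ldots,\mathfrak{g}]$, whereas $\breve{\alpha}$ respects only the grading by powers of $t$, not this splitting: writing $\alpha(u)=u_{1}+b_{1}$ with $u_{1}\in U$ and $b_{1}\in[\mathfrak{g},\ldots,\mathfrak{g}]$, the two orders of composition differ precisely by the term $D'(b_{1})t^{n}$. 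I would resolve this by taking the complement $U$ to be invariant under both $\alpha$ and $\beta$, which makes the projection onto $[\mathfrak{g},\ldots,\mathfrak{g}]$ commute with $\alpha,\beta$ and forces $b_{1}=0$; the commuting of $\varphi(D)$ with $\breve{\alpha},\breve{\beta}$ then follows at once. Securing this $\alpha,\beta$-compatible choice of $U$ is the one genuinely subtle ingredient, everything else being the degree bookkeeping above.
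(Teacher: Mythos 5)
Your part (1) and the Leibniz computation in part (2) coincide with the paper's own proof: independence of the partner $D'$ because any two partners agree on brackets and brackets span $[\mathfrak{g},\ldots,\mathfrak{g}]$; injectivity by reading off the coefficient of $t$; and the derivation identity for $\varphi(D)$ by reducing to degree-one elements (every bracket containing a $t^{n}$-factor vanishes) and then expanding $D'$ through its defining identity. Up to that point your argument is correct and essentially identical to the paper's.

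The divergence is your final paragraph, and it is exactly where your proof has a genuine gap --- albeit one that exposes an omission in the paper itself. You are right that, with the paper's definition, membership in ${\rm Der}_{(\alpha^{s},\beta^{r})}(\breve{\mathfrak{g}})$ requires $\varphi(D)\circ\breve{\alpha}=\breve{\alpha}\circ\varphi(D)$ and $\varphi(D)\circ\breve{\beta}=\breve{\beta}\circ\varphi(D)$, and you are right that the obstruction sits on $Ut^{n}$; the paper's proof never checks this and concludes membership from the Leibniz identity alone. But your repair --- ``take the complement $U$ to be invariant under $\alpha$ and $\beta$'' --- is asserted, not proved, and in general no such complement exists: $[\mathfrak{g},\ldots,\mathfrak{g}]$ is invariant, but the extension of $\mathfrak{g}/[\mathfrak{g},\ldots,\mathfrak{g}]$ by it need not split equivariantly. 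Concretely, let $\mathfrak{g}=\langle x,y,z\rangle$ be the Heisenberg algebra, $[x,y]=z$, with $\alpha=\beta$ given by $\alpha(x)=x$, $\alpha(y)=y+z$, $\alpha(z)=z$; this is a multiplicative BiHom-Lie algebra ($n=2$), $[\mathfrak{g},\mathfrak{g}]=\langle z\rangle$, and $\alpha$ has a single Jordan block joining $y$ to $z$, so no $\alpha$-invariant complement of $\langle z\rangle$ exists. Moreover $D=\mathrm{id}\in{\rm QDer}_{(\alpha^{0},\beta^{0})}(\mathfrak{g})$ with partner $D'=2\,\mathrm{id}$, and for any admissible complement $U$ there is $u\in U$ with $\alpha(u)=u+z$, whence
\begin{equation*}
\varphi(D)\bigl(\breve{\alpha}(ut^{2})\bigr)=\varphi(D)\bigl(ut^{2}+zt^{2}\bigr)=2zt^{2}\neq 0=\breve{\alpha}\bigl(\varphi(D)(ut^{2})\bigr),
\end{equation*}
so the commutation fails for every choice of $U$. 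Thus the step you flagged cannot be closed as you propose: one must either add a hypothesis (e.g.\ $\alpha,\beta$ semisimple, or $\mathfrak{g}=[\mathfrak{g},\ldots,\mathfrak{g}]$ so that $U=0$), or adopt the convention implicit in the paper that membership in ${\rm Der}(\breve{\mathfrak{g}})$ is certified by the Leibniz identity alone. Identifying the problem was correct; the unproved existence of an invariant $U$ is where your proof, as written, breaks.
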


\begin{proof}
	$(1)$ If $\varphi(D_{1})=\varphi(D_{2}),$ then for all $a\in \mathfrak{g},b\in [\mathfrak{g},\ldots ,\mathfrak{g}]$ and $u\in U$ we have 
	$$\varphi(D_{1})(at+ut^{n}+bt^{n})=\varphi(D_{2})(at+ut^{n}+bt^{n}),$$
	so 
	$$D_{1}(a)t+D'_{1}(b)t^{n}=D_{2}(a)t+D'_{2}(b)t^{n},$$
	therefore $D_{1}(a)=D_{2}(a).$ Thus $D_{1}=D_{2}.$
	
	Now suppose that there exists $D''$ such that
	$$\varphi(D)(at+ut^{n}+bt^{n})=D(a)t+D''(b)t^{n},$$
	and 
	$$D''([x_{1},\ldots,x_{n}])=\sum_{i=1}^{n}[\alpha^{s}\beta^{r}(x_{1}),\ldots,\alpha^{s}\beta^{r}(x_{i-1}),D(x_{i}),\alpha^{s}\beta^{r}(x_{i+1}),\ldots,\alpha^{s}\beta^{r}(x_{n})] ,$$
	for any $x_{1},\ldots,x_{n} \in \mathfrak{g}  $, then $ D''([x_{1},\ldots,x_{n}])=D'([x_{1},\ldots,x_{n}]) $. Hence $ D''(b)=D'(b) $ and so 
	$$\varphi(D)(at+ut^{n}+bt^{n})=D(a)t+D'(b)t^{n}=D(a)t+D''(b)t^{n}.$$
	
	$ (2) $ Let $ x_{1}t^{i_{1}}, \ldots, x_{n}t^{i_{n}} \in \breve{\mathfrak{g}} $. Again, here we consider only the case when $ i_1= \ldots= i_n=1 $ since otherwise $[x_1t^{i_1}, \ldots, x_n t^{i_n}]=0.$ 
	\begin{align*}
		&\varphi(D)([x_1t, \ldots, x_nt])=\varphi(D)([x_1,\ldots, x_n]t^n) =D'([x_1, \ldots, x_n])t^n \\
		&=\sum_{i=1}^{n}[\alpha^{s}\beta^{r}(x_{1}),\ldots,\alpha^{s}\beta^{r}(x_{i-1}),D(x_{i}),\alpha^{s}\beta^{r}(x_{i+1}),\ldots,\alpha^{s}\beta^{r}(x_{n})]t^{n} \\
		&=\sum_{i=1}^{n}[\alpha^{s}\beta^{r}(x_{1})t,\ldots,\alpha^{s}\beta^{r}(x_{i-1})t,D(x_{i})t,\alpha^{s}\beta^{r}(x_{i+1})t,\ldots,\alpha^{s}\beta^{r}(x_{n})t] \\
		&=\sum_{i=1}^{n}[\breve{\alpha}^{s}\breve{\beta}^{r}(x_{1}t),\ldots,\breve{\alpha}^{s}\breve{\beta}^{r}(x_{i-1}t),\varphi (D)(x_{i}t),\breve{\alpha}^{s}\breve{\beta}^{r}(x_{i+1}t),\ldots,\breve{\alpha}^{s}\breve{\beta}^{r}(x_{n}t)].
	\end{align*}
	Hence $ \varphi(D) \in {\rm Der}_{(  \alpha^{s},\beta^{r})}(\breve{\mathfrak{g}}) $.
\end{proof}

\begin{proposition}
	Let $ \mathfrak{g} $ be a multiplicative $n$-BiHom-Lie algebra with trivial center and let $ \breve{\mathfrak{g}}, \varphi $ be as defined above. We have 
	$${\rm Der}(\breve{\mathfrak{g}})=\varphi({\rm QDer}(\mathfrak{g}))\oplus {\rm ZDer}(\breve{\mathfrak{g}}).$$
\end{proposition}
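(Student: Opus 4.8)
The plan is to prove that the two summands add up to all of $\mathrm{Der}(\breve{\mathfrak{g}})$ and that the sum is direct, working one bidegree at a time. Since $\mathrm{Der}(\breve{\mathfrak{g}})$, $\varphi(\mathrm{QDer}(\mathfrak{g}))$ and $\mathrm{ZDer}(\breve{\mathfrak{g}})$ are all defined as direct sums over $s,r\geq 0$, and $\varphi$ sends $\mathrm{QDer}_{(\alpha^s,\beta^r)}(\mathfrak{g})$ into $\mathrm{Der}_{(\alpha^s,\beta^r)}(\breve{\mathfrak{g}})$ by the previous proposition, it suffices to fix $s,r$ and establish $\mathrm{Der}_{(\alpha^s,\beta^r)}(\breve{\mathfrak{g}})=\varphi(\mathrm{QDer}_{(\alpha^s,\beta^r)}(\mathfrak{g}))\oplus \mathrm{ZDer}_{(\alpha^s,\beta^r)}(\breve{\mathfrak{g}})$. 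The inclusion $\supseteq$ is immediate, since $\varphi(\mathrm{QDer})\subseteq\mathrm{Der}(\breve{\mathfrak{g}})$ by the previous proposition and every central derivation is a derivation. Throughout I use that the only nonvanishing brackets in $\breve{\mathfrak{g}}$ are $[x_1t,\ldots,x_nt]=[x_1,\ldots,x_n]t^n$ (any $t^n$-factor pushes the total degree past $n$), so that $Z(\breve{\mathfrak{g}})=\mathfrak{g}t^n$.

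For $\subseteq$, fix $f\in\mathrm{Der}_{(\alpha^s,\beta^r)}(\breve{\mathfrak{g}})$ and let $\pi_1,\pi_n$ be the projections of $\breve{\mathfrak{g}}=\mathfrak{g}t\oplus\mathfrak{g}t^n$ onto its two summands. First I would read off a candidate quasiderivation: define $D\colon\mathfrak{g}\to\mathfrak{g}$ by $D(a)t:=\pi_1 f(at)$. Because $\breve\alpha,\breve\beta$ preserve the $t$-degree (hence commute with $\pi_1$) and $f$ commutes with them, one gets $D\alpha=\alpha D$ and $D\beta=\beta D$. Evaluating the derivation identity on $x_1t,\ldots,x_nt$, only the degree-one parts $\pi_1 f(x_it)=D(x_i)t$ survive in the brackets on the right, so
\[
f([x_1,\ldots,x_n]t^n)=\Big(\sum_{i=1}^{n}[\alpha^s\beta^r(x_1),\ldots,D(x_i),\ldots,\alpha^s\beta^r(x_n)]\Big)t^n .
\]
In particular $f$ carries $[\mathfrak{g},\ldots,\mathfrak{g}]t^n$ into $\mathfrak{g}t^n$, and setting $D'([x_1,\ldots,x_n]):=\sum_i[\alpha^s\beta^r(x_1),\ldots,D(x_i),\ldots,\alpha^s\beta^r(x_n)]$ defines $D'$ on $[\mathfrak{g},\ldots,\mathfrak{g}]$ (well defined, since it equals $t^{-n}\pi_n f$ there) with $f(bt^n)=D'(b)t^n$ for $b\in[\mathfrak{g},\ldots,\mathfrak{g}]$. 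Extending $D'$ to $\mathfrak{g}=U\oplus[\mathfrak{g},\ldots,\mathfrak{g}]$ compatibly with $\alpha,\beta$ exhibits $D\in\mathrm{QDer}_{(\alpha^s,\beta^r)}(\mathfrak{g})$; by part $(1)$ of the previous proposition $\varphi(D)$ is independent of this extension.

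Next I would show $g:=f-\varphi(D)\in\mathrm{ZDer}_{(\alpha^s,\beta^r)}(\breve{\mathfrak{g}})$. On the three types of elements: $g(at)=\pi_n f(at)\in\mathfrak{g}t^n$; $g(bt^n)=0$ for $b\in[\mathfrak{g},\ldots,\mathfrak{g}]$; and $g(ut^n)=f(ut^n)$ for $u\in U$, since $\varphi(D)$ kills $Ut^n$. The condition $g([x_1t,\ldots,x_nt])=0$ holds because the bracket lands in $[\mathfrak{g},\ldots,\mathfrak{g}]t^n$, where $g$ vanishes. For the remaining requirement $[\breve\alpha^s\breve\beta^r(\breve x_1),\ldots,g(\breve x_i),\ldots]=0$, multilinearity and the degree count reduce everything to the cases $\breve x_i=a_it$ and $\breve x_i=u_it^n$ with the other entries of $t$-degree one. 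The first case is automatic, as $g(a_it)=\pi_n f(a_it)$ has no degree-one part. The key case $\breve x_i=u_it^n$ is exactly where the derivation identity pays off: since $[\ldots,u_it^n,\ldots]=0$, expanding the derivation identity leaves only the $i$-th term (every other term carries a degree-$n$ factor $\breve\alpha^s\breve\beta^r(u_it^n)$ and vanishes), yielding $[\breve\alpha^s\breve\beta^r(\breve x_1),\ldots,f(u_it^n),\ldots]=0$, precisely the relation $g$ must satisfy. Hence $g$ is central, which proves $\subseteq$.

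Finally, for directness take $h=\varphi(D)\in\varphi(\mathrm{QDer})\cap\mathrm{ZDer}$. Centrality of $h$ applied to $\breve x_i=at$ with degree-one companions forces $[\alpha^s\beta^r(x_1),\ldots,D(a),\ldots,\alpha^s\beta^r(x_n)]=0$ for all $x_j$, so $D(a)$ lies in the center of $\mathfrak{g}$, which is trivial by hypothesis; thus $D=0$, the associated $D'$ vanishes on $[\mathfrak{g},\ldots,\mathfrak{g}]$, and $h=\varphi(0)=0$. I expect the main obstacle to be the bookkeeping in the central case $\breve x_i=u_it^n$: one must verify that no bracket in the derivation identity other than the $i$-th survives and that the surviving relation matches the central-derivation requirement verbatim. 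This, together with the correct identification $Z(\breve{\mathfrak{g}})=\mathfrak{g}t^n$ and the precise point where the trivial center of $\mathfrak{g}$ is invoked for directness, is where care is needed.
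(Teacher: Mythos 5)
Your proof is correct and lands on the same decomposition as the paper --- the quasiderivation part is $D(a)t=\pi_1 f(at)$, with $D'$ read off on $[\mathfrak{g},\ldots,\mathfrak{g}]$, and the remainder is central --- but you assemble it in the opposite order and verify centrality by a genuinely different mechanism. The paper starts from the central summand: using $Z(\mathfrak{g})=\{0\}$ it identifies $Z(\breve{\mathfrak{g}})=\mathfrak{g}t^{n}$, invokes the fact that a derivation preserves the center to conclude that the given derivation maps $Ut^{n}$ into $\mathfrak{g}t^{n}$, and then defines the central part so that its image lies in $\mathfrak{g}t^{n}$ by construction, whence all the $\mathrm{ZDer}$ bracket conditions hold for trivial degree reasons; the work then goes into showing the difference equals $\varphi(D)$. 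You instead extract $D$ first, so you must check the $\mathrm{ZDer}$ conditions for $g=f-\varphi(D)$, whose value $g(ut^{n})=f(ut^{n})$ on $Ut^{n}$ is not visibly in $\mathfrak{g}t^{n}$. Your way around this --- applying the derivation identity of $f$ to the vanishing bracket $[x_{1}t,\ldots,u_{i}t^{n},\ldots,x_{n}t]=0$ and noting that every term except the $i$-th contains the degree-$n$ entry $\breve{\alpha}^{s}\breve{\beta}^{r}(u_{i}t^{n})$ and dies --- is a genuine alternative: it bypasses the center-preservation step (which, with twisted brackets, is itself not immediate in the BiHom setting) and confines the trivial-center hypothesis to the directness argument alone, whereas the paper also needs it to control the derivation on $Ut^{n}$. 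One caution, shared equally with the paper: the $\mathrm{ZDer}$ and center conditions involve entries twisted by $\alpha^{s}\beta^{r}$, so your directness step (vanishing of the inserted brackets implies $D(a)\in Z(\mathfrak{g})$), like the paper's claim that $f\in \mathrm{ZDer}(\breve{\mathfrak{g}})$ forces $f(\breve{\mathfrak{g}})\subseteq Z(\breve{\mathfrak{g}})$, silently identifies the twisted center with the center; this is automatic when $s=r=0$ or when $\alpha,\beta$ are surjective, but neither argument addresses it in general. Likewise your extension of $D'$ from $[\mathfrak{g},\ldots,\mathfrak{g}]$ to all of $\mathfrak{g}$ \emph{compatibly with} $\alpha,\beta$ is asserted rather than proved, but the paper is no more careful there, as it never verifies that its $D,D'$ commute with the structure maps at all.
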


\begin{proof}
	It is obvious that $ \varphi({\rm QDer}(\mathfrak{g})) + {\rm ZDer}(\breve{\mathfrak{g}}) \subseteq {\rm Der}(\breve{\mathfrak{g}}) $ since both $ \varphi({\rm QDer}(\mathfrak{g})) $ and $ {\rm ZDer}(\breve{\mathfrak{g}}) $ are subsets of $ {\rm Der}(\breve{\mathfrak{g}}). $
	
	Moreover, since ${\rm Z}(\mathfrak{g})=\{0\}$, we have ${\rm Z}(\breve{\mathfrak{g}})=\mathfrak{g} t^n.$ Let $ g \in {\rm Der}(\breve{\mathfrak{g}}) $, so $ g(\rm Z(\breve{\mathfrak{g}})) \subseteq \rm Z(\breve{\mathfrak{g}}) $, then $ g(Ut^n)\subseteq g({\rm Z}(\breve{\mathfrak{g}}))\subseteq {\rm Z }(\breve{\mathfrak{g}})=\mathfrak{g} t^n. $ Define a map $f:\mathfrak{g} t+Ut^n+[\mathfrak{g},\ldots,\mathfrak{g}]t^n\rightarrow \mathfrak{g} t^n$ by 
	$$\ f(x)=\left\{\begin{array}{ll}g(x)\cap \mathfrak{g} t^n,& x\in \mathfrak{g} t ;\\
	g(x),& x\in Ut^n ;\\  0,& x\in [\mathfrak{g},\ldots,\mathfrak{g}]t^n.\end{array}\right.$$
	$ f $ is linear and we know that
	$$f([\breve{\mathfrak{g}}, \ldots, \breve{\mathfrak{g}}])=f([\mathfrak{g},\ldots ,\mathfrak{g}]t^n)=0,$$
	\begin{eqnarray*} &&  [\breve{\alpha}^{s}\breve{\beta}^{r}(\breve{\mathfrak{g}}), \ldots, f(\breve{\mathfrak{g}}), \ldots,  \breve{\alpha}^{s}\breve{\beta}^{r}(\breve{\mathfrak{g}})]
	\\ && Ê\subseteq [\alpha^{s}\beta^{r}(\mathfrak{g})t+\alpha^{s}\beta^{r}(\mathfrak{g})t^n,\ldots, \mathfrak{g} t^n,\ldots, \alpha^{s}\beta^{r}(\mathfrak{g})t+\alpha^{s}\beta^{r}(\mathfrak{g})t^n]=0,
	\end{eqnarray*}
	then $ f \in \rm ZDer(\breve{\mathfrak{g}}).  $ We claim that $ g-f \in  \varphi({\rm QDer}(\mathfrak{g}))  $, this implies that $ g \in  \varphi({\rm QDer}(\mathfrak{g})) + {\rm ZDer}(\breve{\mathfrak{g}}) $, hence we have equality. In fact, since 
	$$(g-f)(\mathfrak{g} t)=g(\mathfrak{g} t)-g(\mathfrak{g} t)\cap \mathfrak{g} t^{n}=g(\mathfrak{g} t)-\mathfrak{g} t^{n}\subseteq \mathfrak{g} t,~
	(g-f)(Ut^n)=0,$$ and
	$$(g-f)([\mathfrak{g}, \ldots, \mathfrak{g}]t^n)=g([\breve{\mathfrak{g}},\ldots ,\breve{\mathfrak{g}}])\subseteq
	[\breve{\mathfrak{g}},\ldots, \breve{\mathfrak{g}}]=[\mathfrak{g}, \ldots, \mathfrak{g}]t^n,$$ there exists $D,~D'\in
	{\rm End}(\mathfrak{g})$ such that for all $a\in \mathfrak{g},~b\in [\mathfrak{g}, \ldots, \mathfrak{g}]$,
	$$(g-f)(at)=D(a)t,~ (g-f)(bt^n)=D'(b)t^n.$$ 
	$ g-f \in {\rm Der}(\breve{\mathfrak{g}}) $, then 
	\begin{align*}
		\sum_{i=1}^{n}[\breve{\alpha}^{s}\breve{\beta}^{r}(a_{1}t),\ldots,\breve{\alpha}^{s}\breve{\beta}^{r}(a_{i-1}t),(g-f)(a_{i}t),\breve{\alpha}^{s}\breve{\beta}^{r}(a_{i+1}t),\ldots,\breve{\alpha}^{s}\breve{\beta}^{r}(a_{n}t)] \\ = (g-f)([a_1t, \ldots, a_nt]), 
	\end{align*}
	for all $a_1, \ldots, a_n\in \mathfrak{g}.$ Then 
	$$\sum_{i=1}^{n}[\alpha^{s}\beta^{r}(a_{1}),\ldots,D(a_{i}),\ldots,\alpha^{s}\beta^{r}(a_{n})]t^{n} = D'([a_1, \ldots, a_n])t^{n},$$
	thus 
	$$\sum_{i=1}^{n}[\alpha^{s}\beta^{r}(a_{1}),\ldots,D(a_{i}),\ldots,\alpha^{s}\beta^{r}(a_{n})] = D'([a_1, \ldots, a_n]),$$
	which means that $ D \in {\rm QDer}(\mathfrak{g}). $ Therefore, $g-f=\varphi(D)\in \varphi({\rm
		QDer}(\mathfrak{g}))$.
	
	Now if $f\in \varphi({\rm QDer}(\mathfrak{g}))\cap{\rm ZDer}(\breve{\mathfrak{g}})$, then $ f=\varphi(D) $ for some $ D \in {\rm QDer}(\mathfrak{g}) $. So 
	$$f(at+ut^n+bt^n)=\varphi(D)(at+ut^n+bt^n)=D(a)t+D'(b)t^n,$$ where $a\in \mathfrak{g},b\in [\mathfrak{g}, \ldots, \mathfrak{g}].$ Also, since $f\in {\rm ZDer}(\breve{\mathfrak{g}}),$ we have
	$$f(at+bt^n+ut^n)\in {\rm Z}(\breve{\mathfrak{g}})=\mathfrak{g} t^n.$$ That is,
	$D(a)=0,$ for all $a\in \mathfrak{g}$ and so $D=0.$ Hence $f=0.$
	
	We conclude that 
	$${\rm Der}(\breve{\mathfrak{g}})=\varphi({\rm QDer}(\mathfrak{g}))\oplus {\rm ZDer}(\breve{\mathfrak{g}}).$$
\end{proof}

\section{Generalized Derivations of $(n+1)$-BiHom-Lie Algebras induced by $ n $-BiHom-Lie algebras}\label{induced}

In \cite{n-LieAMS}, the authors investigated a construction of $(n+1)$-Hom-Lie algebras induced by $ n $-Hom-Lie algebras. The construction of $(n+1)$-BiHom-Lie Algebras induced by $ n $-BiHom-Lie algebras was studied in \cite{KMS}.  In this section, we discuss  $(\alpha^s, \beta^r)$-derivations of  $ n $-BiHom-Lie algebras that give $(\alpha^s, \beta^r)$-derivations on the induced  $(n+1)$-BiHom-Lie algebras.

\begin{definition} 
	Let $A$ be a vector space, $\phi : A^n \to A$ be an $n$-linear map and $\tau$ be a linear form.
	The map $\tau$ is said to be an $(\alpha,\beta)$-twisted $\phi$-trace if it satisfies the following condition:
	\[ \forall x_1,\ldots ,x_n \in A, \tau(\phi(\beta(x_1),\ldots,\beta(x_{n-1}),\alpha(x_n))) = 0. \]
	
	We set  $\phi_\tau$  be an $(n+1)$-linear map  defined by:
	\[\forall x_1,\ldots,x_{n+1} \in A, \phi_\tau(x_1,\ldots,x_{n+1}) = \sum_{i=1}^{n+1} (-1)^{i-1} \tau(x_i)\phi(x_1,\ldots,\widehat{x_i},\ldots,x_{n+1}). \]
	
\end{definition}

We recall the  construction of  an $(n+1)$-BiHom-Lie algebra using an $n$-BiHom-Lie algebra and an $(\alpha,\beta)$-twisted trace  given in \cite{KMS}:
\begin{theorem} \label{inducedbihom} 
	
	Let $(A,[\cdot,\ldots,\cdot],\alpha,\beta)$ be an $n$-BiHom-Lie algebra and $\tau$ an  $(\alpha,\beta)$-twisted $[\cdot,\ldots,\cdot]$-trace. If the following conditions 
	\[\forall x,y \in A, \tau(\alpha(x))\beta(y) = \tau(\beta(x))\alpha(y), \]
	\[ \tau \circ \alpha = \tau \text{ and } \tau \circ \beta = \tau, \]
	are satisfied then $(A,[\cdot,\ldots,\cdot]_\tau,\alpha,\beta)$ is an $(n+1)$-BiHom-Lie algebra. We say that this algebra is induced by $(A,[\cdot,\ldots,\cdot],\alpha,\beta)$.
\end{theorem}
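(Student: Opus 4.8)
The plan is to verify the three defining axioms of an $(n+1)$-BiHom-Lie algebra for $(A,[\cdot,\ldots,\cdot]_\tau,\alpha,\beta)$ one at a time. Axiom $(1)$, $\alpha\circ\beta=\beta\circ\alpha$, is immediate since $\alpha,\beta$ are unchanged. Throughout I would abbreviate the original bracket by $\phi=[\cdot,\ldots,\cdot]$ and set $B(z_1,\ldots,z_n):=\phi(\beta(z_1),\ldots,\beta(z_{n-1}),\alpha(z_n))$; Axiom $(2)$ of the $n$-BiHom-Lie algebra says exactly that $B$ is totally skew-symmetric, i.e. $B(z_{\sigma(1)},\ldots,z_{\sigma(n)})=Sgn(\sigma)\,B(z_1,\ldots,z_n)$ for $\sigma\in S_{n}$. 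I would also record the classical contraction lemma behind the $n$-Lie to $(n+1)$-Lie trace construction: if $B$ is totally skew-symmetric and $\tau$ is linear, then $F(z_1,\ldots,z_{n+1}):=\sum_{i=1}^{n+1}(-1)^{i-1}\tau(z_i)B(z_1,\ldots,\widehat{z_i},\ldots,z_{n+1})$ is again totally skew-symmetric (checked on adjacent transpositions).

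For Axiom $(2)$ of the induced bracket I would expand
\begin{align*}
\phi_\tau(\beta(x_1),\ldots,\beta(x_n),\alpha(x_{n+1})) &= \sum_{i=1}^{n}(-1)^{i-1}\tau(\beta(x_i))\,\phi(\beta(x_1),\ldots,\widehat{\beta(x_i)},\ldots,\beta(x_n),\alpha(x_{n+1})) \\
&\quad + (-1)^{n}\tau(\alpha(x_{n+1}))\,\phi(\beta(x_1),\ldots,\beta(x_n)).
\end{align*}
Using $\tau\circ\alpha=\tau\circ\beta=\tau$ replaces every coefficient by $\tau(x_i)$ and $\tau(x_{n+1})$. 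The only summand not already in standard form is the last, an ``all-$\beta$'' bracket. Here the twist hypothesis is decisive: combining $\tau(\alpha(x))\beta(y)=\tau(\beta(x))\alpha(y)$ with $\tau\circ\alpha=\tau\circ\beta=\tau$ gives $\tau(z)\beta(y)=\tau(z)\alpha(y)$ for all $z,y$, so by multilinearity $\tau(x_{n+1})\phi(\beta(x_1),\ldots,\beta(x_n))=\tau(x_{n+1})\,B(x_1,\ldots,x_n)$. After this conversion each summand has the form $(-1)^{i-1}\tau(x_i)B(\ldots\widehat{x_i}\ldots)$, whence $\phi_\tau(\beta(x_1),\ldots,\beta(x_n),\alpha(x_{n+1}))=F(x_1,\ldots,x_{n+1})$, and the total skew-symmetry of $F$ delivers exactly the required $Sgn(\sigma)$-relation.

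For Axiom $(3)$, the generalized Jacobi (Filippov) identity, I would expand both sides via the definition of $\phi_\tau$. The crucial simplification comes from the trace hypothesis: any inner bracket, e.g. $P:=\phi_\tau(\beta(y_1),\ldots,\beta(y_n),\alpha(y_{n+1}))$, is by the previous step a linear combination of standard brackets $B(\cdots)=\phi(\beta(\cdots),\ldots,\alpha(\cdots))$, so $\tau(P)=0$ by the $(\alpha,\beta)$-twisted $\phi$-trace condition. Hence, expanding the outer $\phi_\tau$ on the left-hand side, the term extracting $\tau(P)$ (the one producing an all-$\beta^2$ bracket) drops out, leaving $\sum_{i}(-1)^{i-1}\tau(x_i)\,\phi(\beta^2(x_1),\ldots,\widehat{\beta^2(x_i)},\ldots,\beta^2(x_n),P)$. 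Substituting $P=\sum_l(-1)^{l-1}\tau(y_l)B(y_1,\ldots,\widehat{y_l},\ldots,y_{n+1})$, each summand becomes a doubly-nested bracket of precisely the shape governed by Axiom $(3)$ of the original $n$-BiHom-Lie algebra, so I would apply that identity to rewrite it as a sum of brackets of the complementary nested form. The identical $\phi_\tau$-expansion applied to the right-hand side (again killing the all-$\beta^2$ terms by $\tau(\text{bracket})=0$) produces exactly these brackets, and matching signs and $\tau$-coefficients completes the verification.

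I expect the bookkeeping in Axiom $(3)$ to be the main obstacle: after the two expansions one confronts a triple sum indexed by the removed $x_i$, the removed $y_l$, and the Filippov index $k$, and the entire content reduces to a careful sign-and-coefficient matching. The conceptual reason it goes through is that the three hypotheses on $\tau$ perform exactly two jobs — the trace condition $\tau(\text{bracket})=0$ annihilates every term in which two free variables would be forced inside a single $\phi$ under $\tau$, while the relations $\tau\circ\alpha=\tau\circ\beta=\tau$ and $\tau(z)\beta(y)=\tau(z)\alpha(y)$ let one normalize all surviving brackets into the standard $B$-form, where the original fundamental identity applies verbatim.
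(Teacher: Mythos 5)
First, a point of comparison: the paper itself does not prove this theorem — it is explicitly ``recalled'' from the reference [KMS] (Kitouni--Makhlouf--Silvestrov) — so your proposal can only be measured against the statement, not against an in-paper proof. On its merits, your strategy is the right one and most of it is sound: axiom $(1)$ is indeed immediate; your contraction lemma (skew-symmetry of $F(z_1,\ldots,z_{n+1})=\sum_i(-1)^{i-1}\tau(z_i)B(z_1,\ldots,\widehat{z_i},\ldots,z_{n+1})$ for skew $B$) is correct; the derivation $\tau(z)\beta(y)=\tau(z)\alpha(y)$ from the two hypotheses on $\tau$ is exactly what allows the all-$\beta$ bracket $\phi(\beta(x_1),\ldots,\beta(x_n))$ to be replaced by $B(x_1,\ldots,x_n)$ under the coefficient $\tau(x_{n+1})$, giving $\phi_\tau(\beta(x_1),\ldots,\beta(x_n),\alpha(x_{n+1}))=F(x_1,\ldots,x_{n+1})$ and hence axiom $(2)$; and the observation that $\tau$ vanishes on every inner $\phi_\tau$-bracket (because such a bracket is a combination of $B$'s) correctly kills the all-$\beta^2$ terms in both expansions of axiom $(3)$.

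The gap is in axiom $(3)$, in the sentence asserting that the expanded right-hand side ``produces exactly these brackets.'' It does not. Expanding the $k$-th outer term with inner bracket $Q_k=\phi_\tau(\beta(x_1),\ldots,\beta(x_n),\alpha(y_k))=\sum_{i=1}^n(-1)^{i-1}\tau(x_i)B(x_1,\ldots,\widehat{x_i},\ldots,x_n,y_k)+(-1)^n\tau(y_k)B(x_1,\ldots,x_n)$, the contributions of the first part of $Q_k$ do biject with the terms obtained on the left after applying the original Filippov identity (and the signs match, via the parity identity $\mathrm{pos}_l(k)+\mathrm{pos}_k(l)\equiv k+l-1 \pmod 2$, where $\mathrm{pos}_l(k)$ is the position of $y_k$ in the tuple with $y_l$ omitted). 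But the last summand of $Q_k$ generates, for every ordered pair $(k,l)$ with $l\neq k$, an additional term $\pm\,\tau(y_l)\tau(y_k)\,\phi\bigl((\beta^2(y_j))_{j\neq k,l},\,\phi(\beta(x_1),\ldots,\beta(x_{n-1}),\alpha(x_n))\bigr)$, which has no counterpart on the left-hand side. As written, your final matching step would therefore fail unless you add the argument that these leftover terms cancel among themselves; they do — the $(k,l)$ and $(l,k)$ terms involve the identical bracket and carry opposite signs by the same parity computation — but this internal pairwise cancellation is a necessary extra step, not a consequence of the one-to-one matching you describe. With that cancellation supplied, your proof closes.
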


We first focus on the ternary case.
For a given BiHom-Lie algebra $(\mathfrak{g},[\cdot,\cdot])$ and a $[\cdot,\cdot]$-trace map $\tau: \mathfrak{g} \rightarrow \mathbb{K}$,  the ternary induced bracket is then given by
\begin{eqnarray}\label{tau}
	[x,y,z]_\tau:=\tau(x)[y,z] -\tau(y)[x,z] +\tau(z)[x,y].
\end{eqnarray}
Now we have the following theorem.

\begin{theorem}\label{AAA}
	Let $ (\mathfrak{g}, [\cdot,\cdot], \alpha, \beta ) $ be a BiHom-Lie algebra.   Let $D: \mathfrak{g} \rightarrow \mathfrak{g}$ be an $(\alpha^s,\beta^r)$-derivation of $ (\mathfrak{g}, [\cdot,\cdot], \alpha, \beta ) $.  If the following identity holds, for all $ x,y,z \in \mathfrak{g},$ 
	\[ \alpha^s\beta^r ( \tau(D(x)) [y,z])-\alpha^s\beta^r ( \tau(D(y)) [x,z])+ \alpha^s\beta^r ( \tau(D(z)) [x,y])= 0,\]
	then\\  $D$ is an $(\alpha^s,\beta^r)$-derivation of the induced ternary BiHom-Lie algebra $ (\mathfrak{g}, [\cdot,\cdot,\cdot]_\tau, \alpha, \beta ) $.
\end{theorem}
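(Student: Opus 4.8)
The plan is to verify the two defining conditions of an $(\alpha^s,\beta^r)$-derivation for the induced ternary algebra $(\mathfrak{g},[\cdot,\cdot,\cdot]_\tau,\alpha,\beta)$. The commutation relations $D\circ\alpha=\alpha\circ D$ and $D\circ\beta=\beta\circ D$ are inherited for free, since $D$ is already an $(\alpha^s,\beta^r)$-derivation of the binary algebra. So everything reduces to checking the ternary Leibniz identity \eqref{DRS} for the bracket $[\cdot,\cdot,\cdot]_\tau$ defined in \eqref{tau}.

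First I would compute the left-hand side. Since $\tau(x),\tau(y),\tau(z)$ are scalars and $D$ is linear, expanding \eqref{tau} gives
\[ D([x,y,z]_\tau)=\tau(x)D([y,z])-\tau(y)D([x,z])+\tau(z)D([x,y]), \]
and the binary derivation rule $D([u,v])=[D(u),\alpha^s\beta^r(v)]+[\alpha^s\beta^r(u),D(v)]$ turns each $D([\cdot,\cdot])$ into two bracket terms. This is the target expression I want to reproduce from the right-hand side.

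Next I would expand the right-hand side. Each of its three summands is a ternary $\tau$-bracket, so expanding by \eqref{tau} produces nine terms. Using the hypotheses $\tau\circ\alpha=\tau$ and $\tau\circ\beta=\tau$ (hence $\tau\circ\alpha^s\beta^r=\tau$) I would replace every $\tau(\alpha^s\beta^r(\cdot))$ by $\tau(\cdot)$, and using multiplicativity of $\alpha,\beta$ I would rewrite $[\alpha^s\beta^r(u),\alpha^s\beta^r(v)]$ as $\alpha^s\beta^r([u,v])$. The nine terms then split into two groups: three \emph{diagonal} terms carrying the factors $\tau(D(x)),\tau(D(y)),\tau(D(z))$, namely
\[ \tau(D(x))\,\alpha^s\beta^r([y,z])-\tau(D(y))\,\alpha^s\beta^r([x,z])+\tau(D(z))\,\alpha^s\beta^r([x,y]), \]
and six remaining \emph{off-diagonal} terms carrying $\tau(x),\tau(y),\tau(z)$.

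The key step is the cancellation: pulling each scalar inside $\alpha^s\beta^r$ by linearity, the diagonal group is exactly the left-hand side of the theorem's hypothesis, so it vanishes. Finally I would regroup the six off-diagonal terms by their scalar coefficient; the coefficient of $\tau(x)$ is $[D(y),\alpha^s\beta^r(z)]+[\alpha^s\beta^r(y),D(z)]=D([y,z])$, and likewise the coefficients of $\tau(y)$ and $\tau(z)$ are $-D([x,z])$ and $D([x,y])$, by the binary derivation rule. This reproduces precisely the left-hand side computed above, establishing \eqref{DRS}. The main obstacle is purely organizational, namely keeping the nine expanded terms correctly bookkept and recognizing which three are killed by the hypothesis while the other six reassemble via the binary Leibniz rule; the two ingredients that make the matching possible are the trace invariance $\tau\circ\alpha=\tau\circ\beta=\tau$ and the multiplicativity used to identify $[\alpha^s\beta^r(\cdot),\alpha^s\beta^r(\cdot)]$ with $\alpha^s\beta^r([\cdot,\cdot])$.
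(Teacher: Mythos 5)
Your proposal is correct and follows essentially the same route as the paper: expand both sides of the ternary Leibniz identity via the definition of $[\cdot,\cdot,\cdot]_\tau$, use $\tau\circ\alpha=\tau\circ\beta=\tau$ and multiplicativity of $\alpha,\beta$, and observe that the three ``diagonal'' terms carrying $\tau(D(\cdot))$ are exactly the expression killed by the hypothesis while the remaining six terms reassemble into $D([x,y,z]_\tau)$ by the binary derivation rule. The only difference is cosmetic bookkeeping — you split the right-hand side into vanishing and surviving groups, whereas the paper computes the difference $RHS-LHS$ directly and shows it equals the hypothesis expression.
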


\begin{proof}
	In the sequel, for simplicity we drop the $\tau$ from the ternary bracket.
	We have  to prove that 
	\[
	D[x,y,z]=[D(x), \alpha^s\beta^r(y),  \alpha^s\beta^r(z)] +[ \alpha^s\beta^r(x), D(y),  \alpha^s\beta^r(z) ] + [ \alpha^s\beta^r(x),  \alpha^s\beta^r(y), D(z)].
	\]
	By applying $D$ to each side of equation (\ref{tau}), we get

	\begin{eqnarray}
		LHS &=&D[x,y,z]= \tau(x)D[y,z] -\tau(y)D[x,z] +\tau(z)D[x,y] \nonumber \\
		&=&\tau(x)([D(y), \alpha^s\beta^r(z)] + [\alpha^s\beta^r(y), D(z)]) -  \nonumber \\
		&&  \tau(y)([D(x), \alpha^s\beta^r(z)] - [\alpha^s\beta^r(x), D(z)]) +  \nonumber  \\
		&&        \tau(z)([D(x), \alpha^s\beta^r(y)] + [\alpha^s\beta^r(x), D(y)]),\nonumber 
	\end{eqnarray}
	while,
	\begin{align}
		& RHS= [D(x), \alpha^s\beta^r(y),  \alpha^s\beta^r(z)] +[\alpha^s\beta^r(x), D(y),  \alpha^s\beta^r(z) ] + [ \alpha^s\beta^r(x),  \alpha^s\beta^r(y), D(z)] \nonumber \\
		&=\tau(D(x))[\alpha^s\beta^r(y), \alpha^s\beta^r(z)] -  \tau(\alpha^s\beta^r(y))[\alpha^s\beta^r(x), D(z)] +  \tau(\alpha^s\beta^r(z))[D(x),\alpha^s\beta^r(y)] \nonumber \\
		&+\tau(\alpha^s\beta^r(x))[D(y), \alpha^s\beta^r(z)] -  \tau(D(y))[\alpha^s\beta^r(x), \alpha^s\beta^r(z)] +  \tau(\alpha^s\beta^r(z))[\alpha^s\beta^r(x), D(y)] \nonumber \\
		&+\tau(\alpha^s\beta^r(x))[ \alpha^s\beta^r(y), D(z)] -  \tau(\alpha^s\beta^r(y))[D(x), \alpha^s\beta^r(z)] +  \tau(D(z))[\alpha^s\beta^r(x), \alpha^s\beta^r(y)]. \nonumber 
	\end{align}
	Using  the fact that \[ \tau \circ \alpha = \tau \text{ and } \tau \circ \beta = \tau, \]
	we can rewrite the right hand side as
	\begin{eqnarray}
		RHS&=&\tau(D(x))[\alpha^s\beta^r(y), \alpha^s\beta^r(z)] -  \tau(y)[\alpha^s\beta^r(x), D(z)] +  \tau(z)[D(x),\alpha^s\beta^r(y)] \nonumber \\
		&+&\tau(x)[D(y), \alpha^s\beta^r(z)] -  \tau(D(y))[\alpha^s\beta^r(x), \alpha^s\beta^r(z)] +  \tau(z)[\alpha^s\beta^r(x), D(y)] \nonumber \\
		&+&\tau(x)[ \alpha^s\beta^r(y), D(z)] -  \tau(y)[D(x), \alpha^s\beta^r(z)] +  \tau(D(z))[\alpha^s\beta^r(x), \alpha^s\beta^r(y)], \nonumber 
	\end{eqnarray}
	Thus the difference between the right hand side and the left hand side is given by
	\begin{eqnarray}
		RHS-LHS&=&\tau(D(x))[\alpha^s\beta^r(y), \alpha^s\beta^r(z)]  -  \tau(D(y))[\alpha^s\beta^r(x), \alpha^s\beta^r(z)] + \nonumber \\
		&& \tau(D(z))[\alpha^s\beta^r(x), \alpha^s\beta^r(y)], \nonumber \\
		&=& \tau(D(x))\alpha^s\beta^r[y,z] -  \tau(D(y))\alpha^s\beta^r[x,z] +  \tau(D(z))\alpha^s\beta^r[x,y]. \nonumber
	\end{eqnarray}
	We then obtain the result by assuming that the following identity holds, $\forall x,y,z \in \mathfrak{g}, $
	\[ \alpha^s\beta^r (\tau(D(x))  [y,z] -  \tau(D(y)) [x,z] +  \tau(D(z)) [x,y]) = 0.\]
	This ends the proof.
\end{proof}

Similar computations lead to a generalization of  Theorem~\ref{AAA}  to $n$-ary case.

\begin{theorem}
	Let $ (\mathfrak{g}, [\cdot,\ldots,\cdot], \alpha, \beta ) $ be an $n$-BiHom-Lie algebra.   Let $D: \mathfrak{g} \rightarrow \mathfrak{g}$ be an $(\alpha^s,\beta^r)$-derivation of $ (\mathfrak{g}, [\cdot,\ldots, \cdot], \alpha, \beta ) $.  If the following identity holds 
	\[\forall x_1,\ldots, x_n \in \mathfrak{g}, \sum_{i=1}^{n}(-1)^{i-1} \alpha^s\beta^r( \tau(D(x_i))  [x_1,\ldots,\hat{x_i}\ldots,x_n]) = 0,\]
	
	then $D$ is a derivation of the induced  $(n+1)$-BiHom-Lie algebra $ (\mathfrak{g}, [\cdot,\ldots, \cdot,\cdot], \alpha, \beta ) $.
\end{theorem}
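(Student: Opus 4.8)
The plan is to mimic the proof of Theorem~\ref{AAA}, carrying out the same comparison between the two sides of the derivation identity but organizing the computation around a double sum indexed by the deleted slot and the differentiated slot. Concretely, I must verify that for all $x_1,\ldots,x_{n+1}\in\mathfrak{g}$,
\[ D[x_1,\ldots,x_{n+1}]_\tau = \sum_{j=1}^{n+1}[\alpha^s\beta^r(x_1),\ldots,\alpha^s\beta^r(x_{j-1}),D(x_j),\alpha^s\beta^r(x_{j+1}),\ldots,\alpha^s\beta^r(x_{n+1})]_\tau, \]
where $[\cdot,\ldots,\cdot]_\tau=\phi_\tau$ is the induced $(n+1)$-ary bracket from Theorem~\ref{inducedbihom}.

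First I would expand the left-hand side. Using linearity of $D$ and the definition of $\phi_\tau$, pull the scalars $\tau(x_i)$ outside $D$ to obtain $\sum_{i=1}^{n+1}(-1)^{i-1}\tau(x_i)\,D[x_1,\ldots,\widehat{x_i},\ldots,x_{n+1}]$. Each bracket here is $n$-ary, so applying the defining identity of an $(\alpha^s,\beta^r)$-derivation of $[\cdot,\ldots,\cdot]$ turns the left-hand side into a double sum $\sum_i\sum_{j\neq i}(-1)^{i-1}\tau(x_i)\,[\ldots]$ in which slot $j$ carries $D(x_j)$, slot $i$ is deleted, and all remaining slots carry $\alpha^s\beta^r$.

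Next I would expand the right-hand side by applying $\phi_\tau$ to each summand. For fixed $j$ the arguments are $\alpha^s\beta^r(x_k)$ for $k\neq j$ and $D(x_j)$ in slot $j$; expanding yields $\sum_{k=1}^{n+1}(-1)^{k-1}\tau(y_k)[\,\widehat{y_k}\,]$, which I split into $k\neq j$ and $k=j$. For $k\neq j$ I use $\tau\circ\alpha=\tau=\tau\circ\beta$ to replace $\tau(\alpha^s\beta^r(x_k))$ by $\tau(x_k)$; the resulting term is exactly the left-hand-side term with deleted slot $i=k$ and differentiated slot $j$. Hence summing over all $j$ and all $k\neq j$ reproduces the entire left-hand side, so these terms cancel in $RHS-LHS$. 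The surviving $k=j$ contribution is $\sum_{j=1}^{n+1}(-1)^{j-1}\tau(D(x_j))[\alpha^s\beta^r(x_1),\ldots,\widehat{x_j},\ldots,\alpha^s\beta^r(x_{n+1})]$; factoring $\alpha^s\beta^r$ out of the $n$-ary bracket (using that $\alpha,\beta$ are morphisms) and absorbing the scalar gives precisely the alternating sum $\sum_{j}(-1)^{j-1}\alpha^s\beta^r\!\big(\tau(D(x_j))[x_1,\ldots,\widehat{x_j},\ldots,x_{n+1}]\big)$ appearing in the hypothesis, which vanishes; thus $RHS-LHS=0$.

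The main obstacle is the bookkeeping in this cancellation: I must check that the position-dependent signs $(-1)^{i-1}$ and $(-1)^{k-1}$, together with the reindexing caused by deleting an argument from an $(n+1)$-tuple, align so that the $k\neq j$ block of the right-hand side matches the left-hand side term by term rather than merely up to a global sign. Everything else---linearity, the trace invariance $\tau\circ\alpha=\tau=\tau\circ\beta$, and factoring $\alpha^s\beta^r$ through the bracket---is routine, so the real content is confirming that the sign/index matching is exact and that the leftover diagonal terms are exactly the hypothesis.
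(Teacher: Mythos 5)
Your proof is correct and is essentially the paper's own argument: the paper proves the ternary case (Theorem~\ref{AAA}) by exactly this expand-both-sides-and-cancel computation and then only asserts that ``similar computations'' give the $n$-ary case, which is precisely what you have written out --- including the same reading of the hypothesis as running over $(n+1)$-tuples and the same (implicit, also present in the paper's ternary proof) use of $\alpha,\beta$ being morphisms to factor $\alpha^s\beta^r$ out of the leftover diagonal terms. The sign bookkeeping works exactly as you describe, since on both sides the sign $(-1)^{k-1}$ is attached to the position of the deleted slot in the original $(n+1)$-tuple, so the off-diagonal terms match one-to-one with no reindexing discrepancy.
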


\begin{example}
	We consider the $2$-dimensional BiHom-Lie algebra $\mathfrak{g}$ with a basis $\{e_1, e_2\}$, where the map  $\alpha$ is given by $\alpha(e_1)=e_1$ and $\alpha(e_2)=\frac{1}{m}e_1+\frac{n-1}{n}e_2$ and $\beta$ is the identity map (see  \cite{Sheng-Qi}).  The bracket is given by $[e_1,e_1]=0,\;  [e_1,e_2]=me_2-ne_1, \; [e_2,e_1]=(n-1)e_1-\frac{m(n-1)}{n}e_2$ and $[e_2,e_2]=-\frac{n}{m}e_1+e_2$, where  $m$ and $n$ are scalars such that $m,n \neq0$. \\  A direct computation gives that
	\[
	\alpha^s(e_1)=e_1, \quad    \alpha^s(e_2)=\frac{n^s-(n-1)^s}{ n^{(s-1)} m}e_1+ \frac{(n-1)^s}{n^s}e_2.
	\]
	We set  $D(e_1)=ae_1+be_2$ and $D(e_2)=ce_2+de_2$. 
	Finding the conditions on the parameters $a, b, c, d$ such that $D$ is an $(\alpha^s,\beta^r)$-derivation and solving the system, we obtain the following $(\alpha^s,\beta^r)$-derivations:
	
	$$\begin{array}{llll}
	\text{For} \; n=1, &  D(e_1)=ae_1; & D(e_2)=be_1+(a-mb)e_2. \\
	\text{For} \; n \neq 1, & D(e_1)=0; & D(e_2)=a(e_1+ \frac{m}{n}e_2).
	\end{array}$$

	Now, seeking  for  linear form $\tau$ that are $[\cdot,\cdot]$-trace, we obtain first 
	\[
	n \tau(e_1)=m \tau(e_2).
	\]
	The condition $\tau(\alpha(x))\beta(y) = \tau(\beta(x))\alpha(y) $ implies that $\tau(e_1)=0=\tau(e_2)$ and thus the form $\tau$ is identically trivial. Therefore the ternary bracket is trivial and any linear map  is a derivation.

\end{example}


\begin{thebibliography}{99.}%
%
%
\bibitem{ams:MabroukRep}  
{\it Ammar F.,  Mabrouk S.,  Makhlouf A.,}  
Representations and cohomology of $n$-ary multiplicative Hom-Nambu-Lie algebras, J. Geom.  Physics, 61 (2011), 10, 1898--1913..

\bibitem{AMS} {\it Ataguema H.,  Makhlouf A., Silvestrov S.,} 
Generalization of $n$-ary Nambu algebras and beyond, 
Journal of  Mathematical  Physics,  50 (2009),   8, 083501.


\bibitem{n-LieAMS}
{\it Arnlind J., Makhlouf A., Silvestrov S.,} 
Construction of $n$-Lie algebras and $n$-ary Hom-Nambu-Lie algebras, 
Journal of Mathematical Physics, 52 (2011), 4, 123502, 13 pp. 




\bibitem{bkp}
{\it Beites P.,  Kaygorodov I., Popov Yu.,} 
Generalized derivations of multiplicative $n$-ary $Hom$-$\Omega$ color algebras,
Bulletin of the Malaysian Mathematical Sciences Society, 42 (2019), 1, 315--335.


\bibitem{BEM} {\it Ben Abdeljelil A., Elhamdadi M., Makhlouf A.,} Derivations of ternary Lie algebras and generalizations, Int. Electron. J. Algebra, 21 (2017), 55--75. 


\bibitem{CG} {\it   Caenepeel, S. , Goyvaerts, I. }
Monoidal Hom-Hopf algebras,
Comm. Algebra, 39 (2011), 6, 2216--2240.






\bibitem{Filippov:nLie}
{\it Filippov V.,}
$n$-Lie algebras,
Siberian Mathematical Journal, 26 (1985), 6, 126--140.

\bibitem{Bihom1}
{\it Graziani G., Makhlouf A.,  Menini. C.,  Panaite F.,}
BiHom-associative algebras,
BiHom-Lie algebras and BiHom-bialgebras,
Symmetry, Integrability and geometry,  SIGMA 11 (2015), 086, 34 pages.


\bibitem{Jack} {\it Jacobson N.}
Lie and Jordan triple systems,
Amer. J. Math. 71, (1949). 149-170. 

\bibitem{hom06}
{\it Hartwig J., Larsson D., Silvestrov S.,} 
Deformations of Lie algebras using $\sigma$-derivations, 
Journal of Algebra, 295 (2006), 2, 314--361. 


\bibitem{kay1}  {\it Kaygorodov I.,} $\delta$-Derivations of simple finite-dimensional Jordan superalgebras,
Algebra and Logic, 46 (2007), 5, 318--329.


\bibitem{kay_lie} {\it Kaygorodov I.,}
$\delta$-Derivations of classical Lie superalgebras,
Siberian Mathematical Journal, 50 (2009), 3, 434-449.


\bibitem{kay_lie2}  {\it Kaygorodov I.,}
$\delta$-Superderivations of simple finite-dimensional Jordan and Lie superalgebras,
Algebra and Logic,  49 (2010), 2, 130--144.



\bibitem{kay12mz}   {\it Kaygorodov I.,}
$\delta$-Superderivations of semisimple finite-dimensional Jordan  superalgebras,
Mathematical Notes, 91 (2012), 2, 187--197.

\bibitem{kay12izv}   {\it Kaygorodov I.,}
$\delta$-Derivations of $n$-ary algebras,
Izvestiya: Mathematics, 76 (2012), 5, 1150--1162.




\bibitem{kay11aa} {\it Kaygorodov I.,}
$(n+1)$-Ary derivations of simple $n$-ary algebras, 
Algebra and Logic, 50 (2011), 5, 470--471.

\bibitem{kay14sp} {\it Kaygorodov I.,}
$(n+1)$-Ary derivations of simple Malcev algebras,
St. Peterburg Math. Journal, 23 (2014), 4, 575--585.


\bibitem{kay14mz}
{\it Kaygorodov I.,}
$(n+1)$-Ary derivations of semisimple Filippov algebras,
Mathematical Notes, 96 (2014), 2, 208--216.

\bibitem{Kokh}
{\it Kaygorodov I., Okhapkina E.,}
$\delta$-Derivations of semisimple finite-dimensional structurable algebras, 
Journal of Algebra and its Applications, 13 (2014), 4, 1350130, 12 pp.

\bibitem{KP}
{\it Kaygorodov I., Popov Yu.,}
A characterization of nilpotent nonassociative algebras by invertible Leibniz-derivations,
Journal of Algebra, 456 (2016), 323--347.


\bibitem{KP16}
{\it Kaygorodov I., Popov Yu.,}
Generalized Derivations of (color) $n$-ary algebras, 
Linear Multilinear Algebra, 64 (2016), 6, 1086--1106.



\bibitem{KP16com}
{\it Kaygorodov I., Popov Yu.,}
Commentary  to:  Generalized derivations of Lie triple systems, 
Open Mathematics, 14 (2016), 543--544.

\bibitem{KMS} {\it Kitouni A.,  Makhlouf A.,  Silvestrov S.,}  On $n$-ary Generalization of BiHom-Lie algebras and BiHom-Associative Algebras.,  arXiv:1812.00094,  2018.

\bibitem{KN03} {\it Komatsu H., Nakajima A.,}
Generalized derivations of associative algebras, 
Quaestiones Mathematicae, 26 (2003), 2, 213--235.



\bibitem{LL00}
{\it Leger G., Luks E.,}
Generalized derivations of Lie algebras,
Journal of Algebra, 228 (2000), 1, 165--203.

\bibitem{MS} 
{\it Makhlouf A., Silvestrov A.,} 
Hom-algebra structures, Journal of
Generalized Lie Theory and Applications, 2 (2008), 2, 51--64.


\bibitem{Sheng}
{\it Sheng Y.} 
Representations of Hom-Lie algebras,
Algebr. Represent. Theory, 15 (2012), 6, 1081--1098.

\bibitem{Sheng-Qi}
{\it Sheng Y. , Qi H.} 
Representations of BiHom-Lie algebras,
arXiv: 1610.04302,  (2016).





\bibitem{takhtajan} 
{\it Takhtajan L.,} 
On foundation of the generalized Nambu mechanics, Comm. Math. Phys., 160 (1994), 295--315.


\bibitem{takhtajan2} {\it Takhtajan L.,} 
Leibniz and Lie algebra structures for Nambu algebra, 
Lett. Math. Phys., 39 (1997), 127--141.





\bibitem{ZZ10} {\it Zhang R., Zhang Y., }
Generalized derivations of Lie superalgebras, 
Communications in Algebra, 38 (2010), 10, 3737--3751.



\bibitem{zhel}
{\it Zhelyabin V., Kaygorodov I.,} 
On $\delta$-superderivations of simple superalgebras of Jordan brackets,
St. Petersburg Mathematical Journal, 23 (2012), 4, 665--677.



\bibitem{ZCM16} {\it Zhou J., Chen L., Ma Y.,}
Generalized Derivations of Hom-Lie triple systems, 
Bulletin of the Malaysian Mathematical Sciences Society, 41 (2018), 2, 637--656.




\bibitem{zus10} {\it Zusmanovich~P.,}  On $\delta$-derivations of Lie algebras and superalgebras,
Journal of Algebra, 324 (2010), 12, 3470--3486.


\end{thebibliography}
\end{document}